\numberwithin{equation}{section}
\newtheorem{theorem}{Theorem}[section]
\newtheorem{proposition}[theorem]{Proposition}
\newtheorem{lemma}[theorem]{Lemma}
\newtheorem{corollary}[theorem]{Corollary}
\theoremstyle{definition}
\newtheorem{example}[theorem]{Example}
\newtheorem{definition}[theorem]{Definition}
\newtheorem{examples}[theorem]{Examples}
\newtheorem{remark}[theorem]{Remark}
\newcommand{\cO}{\mbox{${\cal O}$}}
\title{\textbf{Koszulity for graded skew PBW extensions}}
\author{Héctor Suárez \footnote{Seminario de Álgebra Constructiva - $SAC^2$,
Universidad Nacional de Colombia - sede Bogotá.}  \footnote{Escuela de Matemáticas y Estadística,
Universidad Pedagógica y Tecnológica de Colombia - sede Tunja. }}
\date{}
\begin{document}
\maketitle
\begin{abstract}
\noindent Pre-Koszul and Koszul algebras were defined by Priddy in \cite{Priddy1970}. There exist
some relations between these algebras and the skew PBW  extensions defined in \cite{LezamaGallego}.
In \cite{SuarezReyes2016} we  gave  conditions to guarantee that skew PBW extensions over
fields it turns out homogeneous pre-Koszul or Koszul algebra. In this paper we complement these
results defining graded skew PBW extensions and showing that if $R$ is a finite presented Koszul
$\mathbb{K}$-algebra then every graded skew PBW extension of $R$ is Koszul.

\bigskip

\noindent \textit{Key words and phrases.} Graded skew PBW extensions, Koszul algebras, PBW algebras.

\bigskip

\noindent 2010 \textit{Mathematics Subject Classification.}
 16S37,  16W50,  16W70, 16S36, 13N10.
\end{abstract}
%%%%%%%%%%%%%%%%%%%%%%%%%%%%%%%%%%%%%%%%%%%%%%%%%%%%%%%%%%%%%%%%%%%%%%%%%%%%%

\section{Introduction}

% Tomado de (2008) Generalizing the notion of koszul algebras- Cassidy- pag 1

Pre-Koszul and Koszul algebras were  introduced by Priddy in \cite{Priddy1970}. Koszul algebras
have important applications in algebraic geometry, Lie theory, quantum groups, algebraic topology
and combinatorics. The  structure and  history of Koszul algebras are  detailed in
\cite{Polishchuk}. There exist numerous equivalent definitions of a Koszul algebra (see for example
\cite{Backelin}). Koszul algebras have been defined in a more general way by some authors (see
for example \cite{BeilinsonGinzburgSoerge1996}, \cite{Berger7}, \cite{Cassidy2008}, \cite{Li2012},
\cite{Woodcock1998}). Other authors have studied some properties of algebras constructed from Koszul algebras (see for example \cite{He5} and \cite{Shepler1}). In this paper we will consider the classical notion of Kozulity introduced by
Priddy.

Skew PBW extensions or $\sigma$-PBW extensions were defined in \cite{LezamaGallego}. Several
properties of these extensions have been recently studied (see for example \cite{Artamonov}, \cite{LezamaReyes},
\cite{LezamaAcostaReyes2015}, \cite{Reyes2013}, \cite{Reyes2014},
\cite{Reyes2014UIS}, \cite{Reyes2015}, \cite{SuarezLezamaReyes2015}, \cite{Venegas2015}). There
exist some relations between Koszul algebras with the skew PBW extensions of fields. In
\cite{SuarezReyes2016} we prove that every semi-commutative skew PBW extension of a field is
Koszul. In the literature there exist examples of  Koszul algebras  which are skew PBW extensions
of a $\mathbb{K}$-algebra $R\neq \mathbb{K}$. For example, the Jordan plane  is an Artin-Schelter
regular algebra of dimension two and therefore it is a Koszul algebra, but the Jordan plane is a
PBW extension of  $\mathbb{K}[x]$. Therefore, the results given in \cite{SuarezReyes2016}
does not apply in this case. We define graded skew PBW extensions and showed  that every graded skew PBW extension of  a finitely presented Koszul algebra  is Koszul. Thus, our interest in this paper is to study the
Koszul property (in the Priddy's sense)   for graded skew PBW extensions. In the remainder of this
paper, $\mathbb{K}$ is a field and all algebras are $\mathbb{K}$-algebras.

\section{Graded skew PBW extensions}

Let $\mathbb{K}$ be a field. It is said that a $\mathbb{K}$-algebra $A$ is \emph{finitely graded}
(see \cite{Rogalski}) if the following conditions hold:
\begin{enumerate}
\item[\rm (i)] $A$ is $\mathbb{N}$-graded (positively graded): $A = \bigoplus_{j\geq 0}A_j$,
\item[\rm (ii)] $A$ is \emph{connected}, i.e., $A_0 = \mathbb{K}$,
\item[\rm (iii)] $A$ is \emph{finitely generated} as $\mathbb{K}$-algebra, i.e., there is a finite set of
elements $x_1,\dots, x_n\in A$ such that the set $\{x_{i_1}x_{i_2}\cdots x_{i_m}\mid 1\leq i_j\leq n, m\geq 1\} \cup \{1\}$ spans $A$ as
a $\mathbb{K}$-space.
\end{enumerate}

The algebra $A$ is \emph{augmented}, i.e., there is a canonical surjective $\mathbb{K}$-algebra
homomorphism $\varepsilon : A\twoheadrightarrow \mathbb{K}$; $\ker (\varepsilon)$ is called
\emph{augmentation ideal}. $A$ is called \emph{locally finite} if $dim_\mathbb{K}A_j<\infty$, for
all $j\in \mathbb{N}$. A graded $A$-module $M=\bigoplus_{j\in \mathbb{Z}}M_j$ is called
\emph{locally finite} if $dim_\mathbb{K}M_j<\infty$, for all $j\in \mathbb{Z}$. We say that the
graded A-module $M$ is \emph{generated in degree} $s$ if $M = A\cdot M_s$. $M$ is
\emph{concentrated} in degree $m$ if $M = M_{m}$. For any integer $l$, $M(l)$ is a graded
$A-$module whose degree $i$ component is $M(l)_i = M_{i+l}$.

The free associative algebra (tensor algebra) $L$ in $n$ generators $x_1,\dots, x_n$ is the ring $L:=\mathbb{K}\langle x_1,\dots, x_n\rangle$, whose
underlying $\mathbb{K}$-vector space  is the set of all words in the variables $x_i$, that is, expressions
$x_{i_1}x_{i_2}\dots\ x_{i_m}$ for some $m\geq 1$, where $1\leq i_j \leq n$ for all $j$. The length of a word $x_{i_1}x_{i_2}\dots x_{i_m}$ is
$m$. We include among the words a symbol 1, which we think of as the empty word, and which has
length 0. The product of two words is concatenation, and this operation is extended linearly to
define an associative product on all elements. % skwepbwextensions Lez pag 316
Note that $L$ is positively graded with graduation given by $L:=\bigoplus_{j\geq 0}L_j$, where
$L_0= \mathbb{K}$ and $L_j$ spanned by all words of length $j$ in the alphabet $\{x_1, \dots,
x_n\}$, for $j>0$; $L$ is connected, the augmentation of $L$ is given by the natural projection
$\varepsilon: \mathbb{K}\langle x_1,\dots, x_n\rangle\to L_0= \mathbb{K}$ and the augmentation
ideal is given by $L_+:=\bigoplus_{j>0}L_j$. Let $P$ be a subspace of $F_2(L):=\mathbb{K}\bigoplus
L_1\bigoplus L_2$, the algebra $L/\langle P\rangle$ is called (nonhomogeneous) \emph{quadratic
algebra}. $L/\langle P\rangle$ is called \emph{homogeneous quadratic algebra}
if $P$ is a subspace of $L_2$, where $\langle P\rangle$ the two-sided ideal of $L$ generated by $P$.\\

\begin{proposition}\label{prop.Lez fga}
 Let $A$ be a connected $\mathbb{N}$-graded $\mathbb{K}$-algebra. $A$ is finitely generated
as $\mathbb{K}$-algebra if and only if $A = \mathbb{K}\langle x_1,\dots, x_m\rangle/I$, where $I$ is a proper homogeneous
two-sided ideal of $\mathbb{K}\langle x_1,\dots, x_m\rangle$. Moreover, for every $n\in \mathbb{N}$, dim$_\mathbb{K}A_n < \infty$, i.e., $A$ is locally finite.
\end{proposition}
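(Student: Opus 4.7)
The plan is to prove the two implications separately and then derive local finiteness from the construction used in the forward direction. The reverse implication is immediate: if $A = \mathbb{K}\langle x_1, \ldots, x_m\rangle / I$, then the images of $x_1, \ldots, x_m$ generate $A$ as a $\mathbb{K}$-algebra, so $A$ is finitely generated.

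For the forward implication, suppose $A$ is generated as a $\mathbb{K}$-algebra by finitely many elements $y_1, \ldots, y_r$. Using the grading $A = \bigoplus_{j \geq 0} A_j$, I would decompose each generator as $y_i = \sum_j y_{i,j}$ with $y_{i,j} \in A_j$; the collection $\{y_{i,j}\}$ still generates $A$ as a $\mathbb{K}$-algebra (each $y_i$ is a $\mathbb{K}$-linear combination of these components), and by connectedness ($A_0 = \mathbb{K}$) the components of degree $0$ are scalars and may be absorbed into the coefficient field. This yields a finite set of homogeneous generators $z_1, \ldots, z_m$ of positive degrees $d_1, \ldots, d_m$. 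I would then endow the free algebra $\mathbb{K}\langle x_1, \ldots, x_m\rangle$ with the weighted grading $\deg(x_i) = d_i$ and define $\varphi: \mathbb{K}\langle x_1, \ldots, x_m\rangle \to A$ by $\varphi(x_i) = z_i$. Since the $z_i$ generate $A$, the map $\varphi$ is a surjective graded $\mathbb{K}$-algebra homomorphism, so its kernel $I$ is a homogeneous two-sided ideal; moreover $I$ is proper because $\varphi(1) = 1_A \neq 0$.

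For the local finiteness claim, observe that $A_n = \varphi(\mathbb{K}\langle x_1, \ldots, x_m\rangle_n)$, and the degree-$n$ component of the free algebra is spanned by the words $x_{i_1} x_{i_2} \cdots x_{i_k}$ with $d_{i_1} + \cdots + d_{i_k} = n$. Because each $d_i \geq 1$, any such word has length at most $n$, so the number of such words is bounded by $\sum_{k=0}^{n} m^k$, giving $\dim_{\mathbb{K}} A_n < \infty$. The main subtle point I anticipate is precisely the choice of grading on the free algebra: the natural length grading works only when the chosen homogeneous generators all happen to live in degree one, and in the general case one must introduce the weighted grading above so that $\varphi$ respects degrees and $\ker \varphi$ comes out homogeneous. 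The remaining steps are elementary bookkeeping with the decomposition into homogeneous components and the connectedness hypothesis.
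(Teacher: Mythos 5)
Your proof is correct and follows essentially the same route as the paper's: the reverse direction is immediate from the images of the $x_i$, and the forward direction assigns weights $d_i$ to the variables so that the surjection onto $A$ is graded, forcing the kernel to be homogeneous and giving local finiteness from the finitely many weighted words of each degree. Your explicit decomposition of the generators into homogeneous components (absorbing the degree-zero parts via connectedness) just spells out the step the paper states as ``we can assume that every $a_i$ is homogeneous.''
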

\begin{proof}
$\Leftarrow)$: As the free algebra $L:= \mathbb{K}\langle x_1,\dots, x_m\rangle$ is
$\mathbb{N}$-graded and $I$ is homogeneous, i.e., graded, then $L/I$ es $\mathbb{N}$-graded with
graduation given by $(L/I)_n := (L_n + I)/I$. Note that $L/I$ is connected since $(L/I)_0 =
\mathbb{K}$. Moreover, $L/I$ is finitely generated as $\mathbb{K}$-algebra by the elements $x_i :=
x_i + I$, $1 \leq i \leq m$. Observe that $L_n$ is finitely generated as $\mathbb{K}$-vector space,
whence, $(L/I)_n$ is also
finitely generated as $\mathbb{K}$-vector space, i.e., dim$_\mathbb{K}((L/I)_n) < \infty$.\\
$\Rightarrow)$: Let $a_1, \dots, a_m \in A$ be a finite collection of elements that generate $A$ as
$\mathbb{K}$-algebra; by the universal property of the free algebra $\mathbb{K}\langle x_1,\dots, x_m\rangle$, there exists
a $\mathbb{K}$-algebra homomorphism $f : \mathbb{K}\langle x_1,\dots, x_m\rangle \to A$ with $f(x_i) := a_i$, $1 \leq i \leq m$; it
is clear that $f$ is surjective. Let $I := ker(f)$, then $I$ is a proper two-sided ideal of
$\mathbb{K}\langle x_1,\dots, x_m\rangle$ and
\begin{equation}\label{eq.iso A}
A \cong \mathbb{K}\langle x_1,\dots, x_m\rangle/I.
\end{equation}
Since $A$ is $\mathbb{N}$-graded, we can assume that every $a_i$ is homogeneous, $a_i\in A_{d_i}$ for
some $d_i\geq  1$, moreover, at least one of generators is of degree 1. We define a new
graduation for $L = \mathbb{K}\langle x_1,\dots, x_m\rangle$: we put weights $d_i$ to the variables $x_i$ and we set
$L'_n:={_\mathbb{K}}\langle x_{i_1}\cdots x_{i_m}\mid \sum_{j=1}^m d_{i_j} = n\rangle$ (the $\mathbb{K}$-space generated by $\{x_{i_1}\cdots x_{i_m}\mid \sum_{j=1}^m d_{i_j} = n\}$), $n \in \mathbb{N}$.
This implies that $f$ is graded, and from this we obtain that $I$ is homogeneous. In
fact, let $X_1 +\cdots + X_t \in I$, where $X_l\in  L'_{n_l}$, $1 \leq l \leq t$, so $f(X_1) +\cdots + f(X_t) = 0$,
and hence, $f(X_l) = 0$ for every $l$, i.e., $X_l \in I$.
Finally, note that under the isomorphism $\widetilde{f}$ in (\ref{eq.iso A}),  $\widetilde{f}((L'_n + I)/I) = A_n$, so
dim$_\mathbb{K}(A_n) < \infty$.
\end{proof}

Let $A$ be a finitely graded algebra; it is said that $A$ is \emph{finitely presented} if the two-sided ideal $I$ of relations in Proposition \ref{prop.Lez fga} is finitely generated.\\

We now  recall the definition of skew PBW extension  and some subclasses introduced  in
\cite{LezamaGallego}, \cite{LezamaAcostaReyes2015} and \cite{SuarezReyes2016}. We present
also some key properties of these extensions.

\begin{definition}\label{def.skewpbwextensions}
Let $R$ and $A$ be rings. We say that $A$ is a \textit{skew PBW
extension of} $R$ (also called a $\sigma$-PBW extension
of $R$) if the following conditions hold:
\begin{enumerate}
\item[\rm (i)]$R\subseteq A$;
\item[\rm (ii)]there exist finitely many elements $x_1,\dots ,x_n\in A$ such that $A$ is a left free $R$-module, with basis the basic elements
\begin{center}
${\rm Mon}(A):= \{x^{\alpha}=x_1^{\alpha_1}\cdots
x_n^{\alpha_n}\mid \alpha=(\alpha_1,\dots ,\alpha_n)\in
\mathbb{N}^n\}$.
\end{center}
\item[\rm (iii)]For each $1\leq i\leq n$ and any $r\in R\ \backslash\ \{0\}$, there exists an element $c_{i,r}\in R\ \backslash\ \{0\}$ such that
\begin{equation}\label{sigmadefinicion1}
x_ir-c_{i,r}x_i\in R.
\end{equation}
\item[\rm (iv)]For any elements $1\leq i,j\leq n$ there exists $c_{i,j}\in R\ \backslash\ \{0\}$ such that
\begin{equation}\label{sigmadefinicion2}
x_jx_i-c_{i,j}x_ix_j\in R+Rx_1+\cdots +Rx_n.
\end{equation}
Under these conditions we will write $A:=\sigma(R)\langle
x_1,\dots,x_n\rangle$.
\end{enumerate}
\end{definition}

The notation $\sigma(R)\langle x_1,\dots,x_n\rangle$ and the name of the skew PBW extensions is due to the following proposition.

\begin{proposition}[\cite{LezamaGallego}, Proposition 3]\label{sigmadefinition}
Let $A$ be a skew PBW extension of $R$. For each $1\leq i\leq
n$, there exists an injective endomorphism $\sigma_i:R\rightarrow
R$ and a $\sigma_i$-derivation $\delta_i:R\rightarrow R$ such that
\begin{equation}
x_ir=\sigma_i(r)x_i+\delta_i(r),\ \ \ \  \ r \in R.
\end{equation}
\end{proposition}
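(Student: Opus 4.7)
The plan is to exploit condition (iii) of Definition~\ref{def.skewpbwextensions} together with the freeness of $A$ as a left $R$-module on the basis ${\rm Mon}(A)$: this freeness will guarantee that the pair $(\sigma_i(r),\delta_i(r))$ extracted from the normal form of $x_i r$ is \emph{unique}, and all the algebraic properties (additivity, multiplicativity, the derivation rule, injectivity) will then follow by computing $x_i(\cdot)$ in two different ways and comparing coefficients of $x_i$ and $1$ in this basis.

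First, I would \emph{define} $\sigma_i$ and $\delta_i$. Set $\sigma_i(0):=0$ and $\delta_i(0):=0$. For $r\in R\setminus\{0\}$, condition (iii) gives a nonzero $c_{i,r}\in R$ with $x_i r - c_{i,r}x_i \in R$; put $\sigma_i(r):=c_{i,r}$ and $\delta_i(r):=x_i r - c_{i,r}x_i \in R$. Since $1$ and $x_i$ belong to the $R$-basis ${\rm Mon}(A)$ of $A$, any equality $x_ir = sx_i + t$ with $s,t\in R$ determines $s$ and $t$ uniquely, so in particular $c_{i,r}$ depends only on $r$, and the decomposition $x_i r = \sigma_i(r)x_i+\delta_i(r)$ is canonical. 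This uniqueness is the workhorse of the rest of the argument.

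Next, I would verify that $\sigma_i$ is a ring endomorphism and $\delta_i$ is a $\sigma_i$-derivation. Applying $x_i$ to $r+s$ and expanding via the definition in two ways yields, by uniqueness, $\sigma_i(r+s)=\sigma_i(r)+\sigma_i(s)$ and $\delta_i(r+s)=\delta_i(r)+\delta_i(s)$. For multiplication, I compute
\[
x_i(rs) = (x_ir)s = \bigl(\sigma_i(r)x_i+\delta_i(r)\bigr)s = \sigma_i(r)\bigl(\sigma_i(s)x_i+\delta_i(s)\bigr)+\delta_i(r)s = \sigma_i(r)\sigma_i(s)\,x_i + \bigl(\sigma_i(r)\delta_i(s)+\delta_i(r)s\bigr),
\]
and compare with $x_i(rs)=\sigma_i(rs)x_i+\delta_i(rs)$. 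Uniqueness forces $\sigma_i(rs)=\sigma_i(r)\sigma_i(s)$ and the $\sigma_i$-derivation identity $\delta_i(rs)=\sigma_i(r)\delta_i(s)+\delta_i(r)s$. Setting $r=1$ in $x_ir=\sigma_i(r)x_i+\delta_i(r)$ gives $\sigma_i(1)=1$ and $\delta_i(1)=0$, so $\sigma_i$ is unital.

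Finally, injectivity of $\sigma_i$ is essentially encoded in condition (iii): if $r\neq 0$, then $c_{i,r}\in R\setminus\{0\}$, hence $\sigma_i(r)=c_{i,r}\neq 0$; contrapositively, $\sigma_i(r)=0$ forces $r=0$. The only subtle point — and the step that would be the main obstacle were ${\rm Mon}(A)$ not a basis — is the well-definedness of $\sigma_i(r)$, i.e., that $c_{i,r}$ is unambiguously determined by $r$; this is precisely where condition (ii) (left freeness on ${\rm Mon}(A)$) is essential, and everything else is a straightforward consequence of comparing coefficients in that basis.
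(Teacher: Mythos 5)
Your proof is correct and follows essentially the same route as the original argument in \cite{LezamaGallego} (which this paper only cites): define $\sigma_i(r):=c_{i,r}$ and $\delta_i(r):=x_ir-c_{i,r}x_i$ from condition (iii), use the left freeness of $A$ on ${\rm Mon}(A)$ (so that comparison of the coefficients of $x_i$ and $1$ is legitimate) to get well-definedness, and then derive additivity, multiplicativity, the $\sigma_i$-derivation rule and injectivity by computing $x_i(r+s)$ and $x_i(rs)$ in two ways. You correctly identify the freeness of $A$ as the key point, and your injectivity argument (condition (iii) forces $c_{i,r}\neq 0$ for $r\neq 0$, hence $\ker\sigma_i=0$) matches the standard one.
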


\begin{remark}\label{rem.grad f y rep} Let $A=\sigma(R)\langle x_1,\dots, x_n\rangle$ be a skew PBW extension with endomorphisms $\sigma_i$, $1\leq i \leq n$,
as in the Proposition \ref{sigmadefinition}. We establish the following notation (see \cite{LezamaGallego}, Definition 6).  $\alpha: = (\alpha_1,\dots, \alpha_n)\in \mathbb{N}^n$; $\sigma^{\alpha}:= (\sigma_1^{\alpha_1}\cdots  \sigma_n^{\alpha_n})$; $|\alpha|:=\alpha_1+\cdots+\alpha_n$; if $\beta: = (\beta_1,\dots, \beta_n)\in \mathbb{N}^n$, then $\alpha +\beta := (\alpha_1+ \beta_1,\dots, \alpha_n +\beta_n)$; for $X = x^{\alpha}=x_1^{\alpha_1}\cdots
x_n^{\alpha_n}$, exp($X$)$:= \alpha$ and deg($X$)$:= |\alpha|$. We have the following properties whose proof can be found in \cite{LezamaGallego}, Remark 2 and  Theorem 7.
\begin{enumerate}
\item[\rm (i)]Each element $f\in A\ \backslash\ \{0\}$ has a unique representation as $f=c_1X_1+\cdots+c_tX_t$, with $c_i\in R\ \backslash\ \{0\}$ and $X_i\in {\rm Mon}(A)$ for $1\leq i\leq t$.
\item[\rm (ii)] For every $x^{\alpha}\in {\rm Mon}(A)$ and every $0\neq r\in  R$, there exists unique elements $r_{\alpha}:=\sigma^{\alpha}(r)\in R \ \backslash\ \{0\}$ and $p_{\alpha, r}\in A$ such that $x^{\alpha}r = r_{\alpha}x^{\alpha} + p_{\alpha, r}$, where $p_{\alpha, r}= 0$ or deg$(p_{\alpha, r}) < |\alpha|$ if $p_{\alpha, r}\neq 0$.
\item[\rm (iii)] For every $x^{\alpha}$, $x^{\beta}\in {\rm Mon}(A)$ there exist unique elements $c_{\alpha,\beta}\in R$ and $p_{\alpha,\beta}\in A$ such
that $x^{\alpha}x^{\beta}= c_{\alpha,\beta}x^{\alpha+\beta} + p_{\alpha,\beta}$ where $c_{\alpha,\beta}$ is left invertible, $p_{\alpha,\beta}= 0$ or deg$(p_{\alpha,\beta}) <|\alpha + \beta|$ if $p_{\alpha,\beta}\neq 0$.
\end{enumerate}
\end{remark}

\begin{definition}\label{sigmapbwderivationtype}
Let $A$ be a skew PBW extension of $R$, $\Sigma:=\{\sigma_1,\dotsc, \sigma_n\}$ and $\Delta:=\{\delta_1,\dotsc, \delta_n\}$, where $\sigma_i$ and $\delta_i$ ($1\leq i\leq n$) are as in the Proposition \ref{sigmadefinition}
\begin{enumerate}
\item[\rm (a)]  $A$ is called {\em pre-commutative} if the conditions  {\rm(}iv{\rm)} in Definition
\ref{def.skewpbwextensions} are replaced by:\\
 For any $1\leq i,j\leq n$ there exists $c_{i,j}\in R\ \backslash\ \{0\}$ such that
\begin{equation}\label{relat.pre-comm}
x_jx_i-c_{i,j}x_ix_j\in Rx_1+\cdots +Rx_n.
\end{equation}

\item[\rm (b)]\label{def.quasicom} $A$ is called \textit{quasi-commutative} if the conditions
{\rm(}iii{\rm)} and {\rm(}iv{\rm)} in Definition
\ref{def.skewpbwextensions} are replaced by \begin{enumerate}
\item[\rm (iii')] for each $1\leq i\leq n$ and all $r\in R\ \backslash\ \{0\}$ there exists $c_{i,r}\in R\ \backslash\ \{0\}$ such that
\begin{equation}
x_ir=c_{i,r}x_i;
\end{equation}
\item[\rm (iv')]for any $1\leq i,j\leq n$ there exists $c_{i,j}\in R\ \backslash\ \{0\}$ such that
\begin{equation}
x_jx_i=c_{i,j}x_ix_j.
\end{equation}
\end{enumerate}
\item[\rm (c)]  $A$ is called \textit{bijective} if $\sigma_i$ is bijective for each $\sigma_i\in \Sigma$, and $c_{i,j}$ is invertible for any $1\leq
i<j\leq n$.
\item[\rm (d)] If $\sigma_i={\rm id}_R$ for every $\sigma_i\in \Sigma$, we say that $A$ is a skew PBW extension of {\em derivation type}.
\item[\rm (e)]  If $\delta_i = 0$ for every $\delta_i\in \Delta$, we say that $A$ is a skew PBW extension of {\em endomorphism type}.
\item[\rm (f)]   Any element $r$ of $R$ such that $\sigma_i(r)=r$ and $\delta_i(r)=0$ for all $1\leq i\leq n$ will be called a {\em constant}. $A$ is called \emph{constant} if every element of $R$ is constant.
\item[\rm (g)]  $A$ is called {\em semi-commutative} if $A$ is quasi-commutative and constant.
\end{enumerate}
\end{definition}

Let $I\subseteq \sum_{n\geq 2} L_{n}$ be a finitely-generated homogeneous ideal of $\mathbb{K}\langle x_1,\dots, x_n\rangle$ and let $R = \mathbb{K}\langle x_1,\dots, x_n\rangle/I$, which is a connected-graded $\mathbb{K}$-algebra generated in degree 1. Suppose $\sigma : R \to R$ is a graded algebra automorphism and $\delta : R(-1) \to R$ is a graded $\sigma$-derivation (i.e. a degree +1 graded $\sigma$-derivation $\delta$ of $R$). Let  $A := R[x; \sigma,\delta]$  be the associated \emph{graded Ore extension} of $R$; that is, $A = \bigoplus_{n\geq 0} Rx^n$ as an $R$-module, and for $r\in R$, $xr = \sigma(r)x + \delta(r)$. We consider $x$ to have degree 1 in $A$, and under this grading $A$ is a connected graded algebra generated in degree 1 (see \cite{Cassidy2008} and \cite{Phan}). We introduce the definition of graded skew PBW extensions following \cite{Cassidy2008}.

\begin{definition}\label{def. graded skew PBW ext} Let  $A=\sigma(R)\langle x_1,\dots, x_n\rangle$ be a bijective skew PBW extension of a  $\mathbb{N}$-graded $\mathbb{K}$-algebra $R$. We said that $A$ is a \emph{graded  skew PBW extension} if the following conditions hold:

\begin{enumerate}
\item[\rm (i)] $x_1,\dots, x_n$ have degree 1 in $A$.
\item[\rm (ii)] $\sigma_i$ is a graded ring homomorphism and $\delta_i : R(-1) \to R$ is a graded $\sigma_i$-derivation for all $1\leq i  \leq n$, where $\sigma_i$ and $\delta_i$ are as in the Proposition \ref{sigmadefinition}.
\item[\rm (iii)]  $x_jx_i-c_{i,j}x_ix_j\in R_2+R_1x_1 +\cdots + R_1x_n$, as in (\ref{sigmadefinicion2}) and $c_{i,j}\in R_0$.
\end{enumerate}
 \end{definition}

\begin{proposition}\label{prop.grad A}
Let $A$ be a graded skew PBW extension of $R$ and let $A_p$ the $\mathbb{K}$-space generated by the set
\[\Bigl\{r_tx^{\alpha} \mid t+|\alpha|= p,\  r_t\in R_t \text{  and } x^{\alpha}\in {\rm Mon}(A)\Bigr\},
\]
for $p\geq 0$. Then:
\begin{enumerate}
\item[\rm(i)]  $R_p\subseteq A_p$ for each $p\geq 0$.
\item[\rm(ii)] $A$ is a  graded $\mathbb{K}$-algebra with graduation
\begin{equation}\label{eq.grad alg skew}
A=\bigoplus_{p\geq 0} A_p.
\end{equation}
\item[\rm(iii)] $A$ is a graded  $R$-module  with the above graduation.
\end{enumerate}
\end{proposition}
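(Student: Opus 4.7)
The plan is to exploit the homogeneity built into Definition \ref{def. graded skew PBW ext}: since each $\sigma_i$ is a graded endomorphism of $R$, each $\delta_i$ is a graded $\sigma_i$-derivation of degree $+1$, and the quadratic commutation relations have $c_{i,j}\in R_0$ with remainder in $R_2+\sum_l R_1 x_l$, every defining relation of $A$ is homogeneous for the grading $\deg(x_i)=1$ extending the grading on $R$. Consequently $A$ inherits a graded-algebra structure whose $p$-th component turns out to be exactly $A_p$, and parts (i)--(iii) unfold as follows.

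Part (i) is immediate from the definition: $r_p\in R_p$ equals $r_p\cdot x^{0}$ with $p+|0|=p$, hence $r_p\in A_p$. For part (ii), since $A$ is a free left $R$-module with basis $\operatorname{Mon}(A)=\{x^{\alpha}\}$ (Definition \ref{def.skewpbwextensions}(ii)) and $R=\bigoplus_{t\ge 0}R_t$, one has
\[
A=\bigoplus_{\alpha}R\,x^{\alpha}=\bigoplus_{\alpha,t}R_t\,x^{\alpha}=\bigoplus_{p\ge 0}\Bigl(\bigoplus_{t+|\alpha|=p}R_t\,x^{\alpha}\Bigr)=\bigoplus_{p\ge 0}A_p,
\]
giving the direct-sum decomposition as $\mathbb{K}$-vector space. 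The multiplicative property $A_p\cdot A_q\subseteq A_{p+q}$ reduces by bilinearity to showing $(r_t x^{\alpha})(s_u x^{\beta})=r_t\cdot (x^{\alpha}s_u)\cdot x^{\beta}\in A_{t+|\alpha|+u+|\beta|}$ for homogeneous $r_t\in R_t,\,s_u\in R_u$; since $R_t\cdot A_p\subseteq A_{t+p}$ is clear from the grading on $R$, this in turn reduces to two auxiliary claims: (A) $x^{\alpha}s_u\in A_{|\alpha|+u}$ and (B) $x^{\alpha}x^{\beta}\in A_{|\alpha|+|\beta|}$. Part (iii) then follows from (i) together with the multiplicativity established in (ii), which makes $R\hookrightarrow A$ a graded inclusion.

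Claim (A) is proved by induction on $|\alpha|$: the base case $x_i s_u=\sigma_i(s_u)x_i+\delta_i(s_u)$ from Proposition \ref{sigmadefinition} lies in $A_{u+1}$ because $\sigma_i(s_u)\in R_u$ gives $\sigma_i(s_u)x_i\in A_{u+1}$ and $\delta_i(s_u)\in R_{u+1}\subseteq A_{u+1}$, and the inductive step expands $x^{\alpha}=x_k x^{\alpha'}$ and applies the base case degree-by-degree. Claim (B) is proved by induction on $|\alpha|+|\beta|$; the degree-$2$ base case $x_i x_j\in A_2$ is handled by Definition \ref{def. graded skew PBW ext}(iii), whose precise form $c_{i,j}\in R_0$ and $q_{ij}\in R_2+\sum_l R_1 x_l$ guarantees that every summand of the quadratic relation lies in $A_2$.

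The main obstacle is executing the inductive step of (B) cleanly. Reducing $x^{\alpha}x^{\beta}$ to standard PBW form involves repeated application of $x_j x_i=c_{i,j}x_i x_j+q_{ij}$, and each $q_{ij}$-remainder triggers further reductions, so the induction must absorb an entire cascade of terms into $A_{|\alpha|+|\beta|}$. The graded constraints from Definition \ref{def. graded skew PBW ext}(iii) are exactly what keep every intermediate term in the correct graded piece, and in combination with the graded behavior of $\sigma_i$ and $\delta_i$ they make the induction carry through.
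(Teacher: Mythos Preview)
Your argument is correct in structure and mirrors the paper's approach: establish the direct-sum decomposition, then verify $A_pA_q\subseteq A_{p+q}$ by checking that the commutation rules preserve homogeneity. Your direct-sum argument via freeness of $A$ as a left $R$-module is in fact tidier than the paper's, which instead writes a general element in the $R$-basis ${\rm Mon}(A)$ and then decomposes each $R$-coefficient into homogeneous pieces.

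There is, however, a small circularity in your inductive proof of claim~(A). If you peel off $x^{\alpha}=x_k x^{\alpha'}$ and use the inductive hypothesis to place $x^{\alpha'}s_u$ in $A_{|\alpha'|+u}$, you then need $x_k\cdot A_{|\alpha'|+u}\subseteq A_{|\alpha|+u}$; but for a generic summand $r_v x^{\gamma}\in A_{|\alpha'|+u}$ the product $x_k x^{\gamma}$ need not be a standard monomial, so you are tacitly invoking~(B). The paper sidesteps this by first recording that $x_i^{a}r$ is a sum of terms of the form $(\text{iterated }\sigma_i,\delta_i\text{ applied to }r)\cdot x_i^{b}$, each lying in $R_{\deg r+(a-b)}\,x_i^{b}\subseteq A_{\deg r+a}$, and then moving $r$ past $x_n^{\alpha_n},x_{n-1}^{\alpha_{n-1}},\ldots,x_1^{\alpha_1}$ in that order; since the resulting words $x_1^{b_1}\cdots x_n^{b_n}$ are already standard at every stage, (A) follows without any appeal to~(B). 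With (A) secured this way, the paper handles (B) much as you do---an explicit degree-$3$ computation followed by the general straightening of \cite{LezamaGallego}, Theorem~7. Your sketch for (B) is fine once (A) is established independently, or if you run (A) and (B) together as a single induction on total degree.
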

\begin{proof}
Let $R=\bigoplus_{p\geq 0} R_p$ be a graded algebra and let $A=\sigma(R)\langle x_1, \dots, x_n\rangle$ be a graded skew PBW extension.\\
\rm(i): If $r_p\in R_p$, then $r_p= r_px^0_1\cdots x^0_n  \in A_p$.\\
\rm(ii): It is clear that $1=x^0_1\cdots x^0_n\in A_0$. Let $f\in A\ \backslash\ \{0\}$, then by
Remark \ref{rem.grad f y rep}, $f$  has a unique representation as $f=r_1X_1+\cdots+r_sX_s$, with
$r_i\in R\ \backslash\ \{0\}$ and $X_i:=x_1^{\alpha_{i_1}}\cdots x_n^{\alpha_{i_n}}\in {\rm
Mon}(A)$ for $1\leq i\leq s$. Let $r_i= r_{i_{q_1}}+ \cdots + r_{i_{q_m}}$ the unique
representation of $r_i$ in homogeneous elements of $R$. Then $f= (r_{1_{q_1}}+ \cdots +
r_{1_{q_m}}) x_1^{\alpha_{1_1}}\cdots x_n^{\alpha_{1_n}}+\cdots + (r_{s_{q_1}}+ \cdots +
r_{s_{q_u}})x_1^{\alpha_{s_1}}\cdots x_n^{\alpha_{s_n}}= r_{1_{q_1}} x_1^{\alpha_{1_1}}\cdots
x_n^{\alpha_{1_n}}+ \cdots + r_{1_{q_m}} x_1^{\alpha_{1_1}}\cdots x_n^{\alpha_{1_n}}+ \cdots +
r_{s_{q_1}}x_1^{\alpha_{s_1}}\cdots x_n^{\alpha_{s_n}} + \cdots +
r_{s_{q_u}}x_1^{\alpha_{s_1}}\cdots x_n^{\alpha_{s_n}}$ is  the unique representation of $f$ in
homogeneous elements of $A$. Therefore $A$ is a direct sum of the family $\{A_p\}_{p\geq 0}$ of
subspaces of $A$.

Now, let $x\in A_pA_q$. Without loss of generality we can assume that  $x= (r_tx^{\alpha})(r_sx^{\beta})$ with $r_t\in R_t$,  $r_s\in R_s$, $ x^{\alpha},  x^{\beta}\in {\rm Mon}(A)$, $t+|\alpha|= p$ and $s+|\beta|= q$. By Remark \ref{rem.grad f y rep}-{\rm(ii)}, we have that for $r_s$ and $x^{\alpha}$ there exists unique elements $r_{s_{\alpha}}:=\sigma^{\alpha}(r_s)\in R \ \backslash\ \{0\}$ and $p_{\alpha, r_s}\in A$ such
that $x= r_t(r_{s_{\alpha}}x^{\alpha} + p_{\alpha, r_s})x^{\beta}= r_tr_{s_{\alpha}}x^{\alpha}x^{\beta}+ r_tp_{\alpha, r_s}x^{\beta}$,
where $p_{\alpha, r_s}= 0$ or deg$(p_{\alpha, r_s}) < |\alpha|$ if $p_{\alpha, r_s}\neq 0$. Now, by Remark \ref{rem.grad f y rep}-{\rm(iii)},
we have that for $x^{\alpha}$, $x^{\beta}$ there exists unique elements $c_{\alpha,\beta}\in R$ and $p_{\alpha,\beta}\in A$ such
that $x=r_tr_{s_{\alpha}}(c_{\alpha,\beta}x^{\alpha+\beta} + p_{\alpha,\beta})+ r_tp_{\alpha, r_s}x^{\beta}=r_tr_{s_{\alpha}}c_{\alpha,\beta}x^{\alpha+\beta} +
r_tr_{s_{\alpha}}p_{\alpha,\beta}+ r_tp_{\alpha, r_s}x^{\beta}$, where $c_{\alpha,\beta}$ is left invertible,
$p_{\alpha,\beta}= 0$ or deg$(p_{\alpha,\beta}) <|\alpha + \beta|$ if $p_{\alpha,\beta}\neq 0$. We note that:
\begin{enumerate}
\item Since $\sigma_i$ is graded  for $1\leq i\leq n$, then $\sigma_i^{\alpha_i}$ is graded and therefore $\sigma^{\alpha}$ is graded.
Then $r_{s_{\alpha}}:=\sigma^{\alpha}(r_s)\in R_s$ and  $\delta_i^{\alpha_i}(r_s)\in R_{s+\alpha_i}$, for $1\leq i\leq n$
and $\alpha_i\geq 0$.
\item $x_i^{\alpha_i}r_s=\sigma_i^{\alpha_i}(r_s)x_i^{\alpha_i}+\delta_i(\sigma_i^{\alpha_i-1}(r_s))x_i^{\alpha_i-1}+ \delta_i^2(\sigma_i^{\alpha_i-2}(r_s))x_i^{\alpha_i-2}+\cdots + \delta_i^j(\sigma_i^{\alpha_i-j}(r_s))x_i^{\alpha_i-j}+\cdots + \delta_i^{\alpha_i-1}(\sigma_i(r_s))x_i + \delta_i^{\alpha_i}(r_s)\in A_{s+\alpha_i}$, since each summand in the above expression is in $A_{s+\alpha_i}$.
\item From the Definition \ref{def. graded skew PBW ext}, we have that for $1\leq i< j\leq n$, $x_jx_i=c_{i,j}x_ix_j+ r_{0_{ij}} + r_{1_{ij}}x_1
+ \cdots + r_{n_{ij}}x_n\in A_2$. Then, for  $1\leq i< j< k\leq n$, we have that
    \begin{align*}
    x_k(x_jx_i)  = & x_k(c_{i,j}x_ix_j+ r_{0_{ij}} + r_{1_{ij}}x_1 + \cdots + r_{n_{ij}}x_n)\\
     = & (\sigma_k(c_{i,j})x_kx_ix_j+\delta_k(c_{i,j})x_ix_j) + (\sigma_k(r_{0_{ij}})x_k +\delta_k(r_{0_{ij}}))\\
     & + (\sigma_k(r_{1_{ij}})x_kx_1 +\delta_k(r_{1_{ij}})x_1)+\cdots
    + (\sigma_k(r_{n_{ij}})x_kx_n +\delta_k(r_{n_{ij}})x_n)\\
     = & \sigma_k(c_{i,j})[c_{i,k}x_ix_k+ r_{0_{ik}} + r_{1_{ik}}x_1 + \cdots + r_{n_{ik}}x_n]x_j +\delta_k(c_{i,j})x_ix_j + \sigma_k(r_{0_{ij}})x_k\\
      & +\delta_k(r_{0_{ij}}) + \sigma_k(r_{1_{ij}})[c_{1,k}x_1x_k + r_{0_{1k}} + r_{1_{1k}}x_1 + \cdots + r_{n_{1k}}x_n ]\\
      & +\delta_k(r_{1_{ij}})x_1+\cdots + \sigma_k(r_{n_{ij}})x_kx_n+\delta_k(r_{n_{ij}})x_n\\
    = & \sigma_k(c_{i,j})c_{i,k}x_i[c_{j,k}x_jx_k + r_{0_{ij}} + r_{1_{jk}}x_1 + \cdots + r_{n_{jk}}x_n] + \sigma_k(c_{i,j}) r_{0_{ik}}x_j\\
    & + \sigma_k(c_{i,j}) r_{1_{ik}}x_1x_j + \cdots  + \sigma_k(c_{i,j}) r_{n_{ik}}x_nx_j  +\delta_k(c_{i,j})x_ix_j + \sigma_k(r_{0_{ij}})x_k\\
    & +\delta_k(r_{0_{ij}}) + \sigma_k(r_{1_{ij}})[c_{1,k}x_1x_k  + r_{0_{1k}} + r_{1_{1k}}x_1 + \cdots   + r_{n_{1k}}x_n ]\\
     & +\delta_k(r_{1_{ij}})x_1+\cdots      + \sigma_k(r_{n_{ij}})x_kx_n+\delta_k(r_{n_{ij}})x_n\\
     = & \sigma_k(c_{i,j})c_{i,k}\sigma(c_{j,k})x_ix_jx_k+ \sigma_k(c_{i,j})c_{i,k}\delta_i(c_{j,k})x_jx_k + \sigma_k(c_{i,j})c_{i,k}\sigma(c_{j,k})\sigma_i(r_{0_{ij}})x_i \\
    &   + \sigma_k(c_{i,j})c_{i,k}\delta_i(r_{0_{ij}}) + \sigma_k(c_{i,j})c_{i,k}\sigma_i(r_{1_{jk}})x_ix_1 +   \sigma_k(c_{i,j})c_{i,k}\delta_i(r_{1_{jk}})x_1+ \cdots \\
     &   + \sigma_k(c_{i,j})c_{i,k}\sigma_i(r_{n_{jk}})x_ix_n + \sigma_k(c_{i,j})c_{i,k}\delta_i(r_{n_{jk}})x_n + \sigma_k(c_{i,j}) r_{0_{ik}}x_j \\
     & +  \sigma_k(c_{i,j}) r_{1_{ik}}x_1x_j + \cdots + \sigma_k(c_{i,j}) r_{n_{ik}}x_nx_j +\delta_k(c_{i,j})x_ix_j + \sigma_k(r_{0_{ij}})x_k  +\delta_k(r_{0_{ij}}) \\
     & + \sigma_k(r_{1_{ij}})c_{1,k}x_1x_k + \sigma_k(r_{1_{ij}})r_{0_{1k}} +\sigma_k(r_{1_{ij}}) r_{1_{1k}}x_1 + \cdots + \sigma_k(r_{1_{ij}})r_{n_{1k}}x_n  \\
     & +\delta_k(r_{1_{ij}})x_1+\cdots + \sigma_k(r_{n_{ij}})x_kx_n+\delta_k(r_{n_{ij}})x_n\\
     = & \sigma_k(c_{i,j})c_{i,k}\sigma(c_{j,k})x_ix_jx_k+ \sigma_k(c_{i,j})c_{i,k}\delta_i(c_{j,k})x_jx_k + \sigma_k(c_{i,j})c_{i,k}\sigma(c_{j,k})\sigma_i(r_{0_{ij}})x_i \\
    &   + \sigma_k(c_{i,j})c_{i,k}\delta_i(r_{0_{ij}}) + \sigma_k(c_{i,j})c_{i,k}\sigma_i(r_{1_{jk}})c_{1,i}x_1x_i + \sigma_k(c_{i,j})c_{i,k}\sigma_i(r_{1_{jk}})r_{0_{1i}}\\
     & + \sigma_k(c_{i,j})c_{i,k}\sigma_i(r_{1_{jk}})r_{1_{1i}}x_1 + \cdots   + \sigma_k(c_{i,j})c_{i,k}\sigma_i(r_{1_{jk}}) r_{n_{1i}}x_n \\
     &   +    \sigma_k(c_{i,j})c_{i,k}\delta_i(r_{1_{jk}})x_1+ \cdots   + \sigma_k(c_{i,j})c_{i,k}\sigma_i(r_{n_{jk}})x_ix_n + \sigma_k(c_{i,j})c_{i,k}\delta_i(r_{n_{jk}})x_n\\
     &  + \sigma_k(c_{i,j}) r_{0_{ik}}x_j  +  \sigma_k(c_{i,j}) r_{1_{ik}}x_1x_j + \cdots +  \sigma_k(c_{i,j}) r_{n_{ik}}c_{j,n}x_jx_n +
      \sigma_k(c_{i,j}) r_{n_{ik}} \\
      & + \sigma_k(c_{i,j}) r_{n_{ik}}r_{0_{jn}}+\sigma_k(c_{i,j}) r_{n_{ik}}r_{1_{jn}}x_1+\cdots + \sigma_k(c_{i,j}) r_{n_{ik}}r_{n_{jn}}x_n + \delta_k(c_{i,j})x_ix_j\\
      & + \sigma_k(r_{0_{ij}})x_k  +\delta_k(r_{0_{ij}})
     + \sigma_k(r_{1_{ij}})c_{1,k}x_1x_k + \sigma_k(r_{1_{ij}})r_{0_{1k}} +\sigma_k(r_{1_{ij}}) r_{1_{1k}}x_1 + \cdots\\
      & + \sigma_k(r_{1_{ij}})r_{n_{1k}}x_n
      + \delta_k(r_{1_{ij}})x_1+ \cdots + \sigma_k(r_{n_{ij}})x_kx_n+\delta_k(r_{n_{ij}})x_n.
     \end{align*}
Since all summands in the above sum have the form $rx$, where $r$ is an homogeneous element of $R$,
$x\in {\rm Mon}(A)$ and $rx\in A_3$, we have that  $x_kx_jx_i\in A_3$. Following this procedure we
get in general that $x_{i_1}x_{i_2}\cdots x_{i_m}\in A_m$ for $1\leq i_k\leq n$, $1\leq k \leq m$,
$m\geq 1$.
\item In a similar way and following the proof of \cite{LezamaGallego}, Theorem 7, we obtain that
$x^{\alpha}r_s\in A_{|\alpha| +s}$, and since $c_{\alpha,\beta}\in R_0$, then
$x^{\alpha}x^{\beta}\in A_{|\alpha|+|\beta|}$. Therefore $p_{\alpha, r_s}\in  A_{|\alpha| +s}$ and
$p_{\alpha,\beta}\in A_{|\alpha|+|\beta|}$. Then
$r_tr_{s_{\alpha}}c_{\alpha,\beta}x^{\alpha+\beta}\in A_{t+s+|\alpha|+|\beta|}$,
$r_tr_{s_{\alpha}}p_{\alpha,\beta}\in A_{t+s+|\alpha|+|\beta|}$ and $r_tp_{\alpha, r_s}x^{\beta}\in
A_{t+|\alpha|+ s+|\beta|}$, i.e., $x\in A_{p+q}$.
\end{enumerate}
\rm(iii): This follows from \rm(ii).
\end{proof}

\begin{example} Quasi-commutative skew PBW extensions with  the trivial graduation of  $R$ is a graded   skew PBW extensions:
Let $r\in R=R_0$, then $\sigma_i(r)=c_{i,r}\in R_0$, $\delta_i=0$ and  $x_jx_i-c_{i,j}x_ix_j=0 \in R_2+R_1x_1 +\cdots + R_1x_n$;
if we assume that  $R$  has a different graduation to the trivial graduation, then $A$ is graded skew PBW extension  provided that $\sigma_i$
is graded and $c_{i,j}\in R_0$,  $1\leq i,j \leq n$.
\end{example}

\begin{examples}\label{ex.grad skew pbw ext} Next we present specific examples of graded skew PBW extensions  of the classical polynomial ring $R$
with coefficients in a field $\mathbb{K}$, which are not quasi-commutative and where $R$ has the
usual graduation. In \cite{LezamaGallego}, \cite{LezamaReyes} and \cite{SuarezReyes2016} we
can be found further details of these algebras.
\begin{enumerate}
\item The Jordan plane. $A=\mathbb{K}\langle x, y\rangle/ \langle yx-xy-x^2\rangle\cong \sigma(\mathbb{K}[x])\langle y\rangle$.

\item The homogenized enveloping algebra. $\mathcal{A}(\mathcal{G})\cong \sigma(\mathbb{K}[z])\langle x_1,\dots, x_n\rangle$.

\item The Diffusion algebra 2.  $A \cong \sigma(\mathbb{K}[x_1, \dots, x_n])\langle D_1,\dots, D_n\rangle$.

\item The algebra $U\cong\sigma(\mathbb{K}[x_1,\dotsc,x_n]) \langle
y_1,\dotsc,y_n;z_1,\dotsc,z_n\rangle$.

\item Manin algebra. $\cO(M_q(2))\cong \sigma(\mathbb{K}[u])\langle x, y, v\rangle$.

\item  Algebra of quantum matrices. $\cO_q(M_n(\mathbb{K}))\cong
\sigma(\mathbb{K}[x_{im}, x_{jk}])\langle x_{ik},
x_{jm}\rangle$, for $1\le i<j,k<m\le n$.

\item Quadratic algebras. If $a_1 = a_4 = 0$ then the quadratic algebra is  a graded   skew PBW extension of $R= \mathbb{K}[y, z]$, and if $a_5 = a_3 = 0$ then quadratic algebras are graded skew PBW extensions of $R= \mathbb{K}[x, z]$.
\end{enumerate}

\end{examples}

\begin{remark}\label{rem.prop of graded skew} Let $A=\sigma(R)\langle x_1,\dots, x_n\rangle$ be a graded skew $PBW$ extension. Then  we immediately have the following properties:
\begin{enumerate}
\item[\rm (i)] $A$ is a $\mathbb{N}$-graded $\mathbb{K}$-algebra and  $A_0=R_0$.
\item[\rm (ii)] $R$ is connected if and only if $A$ is connected.
\item[\rm (iii)] If $R$ is finitely generated then $A$ is finitely generated. Indeed, as ${\rm Mon}(A)= \{x^{\alpha}=x_1^{\alpha_1}\cdots
x_n^{\alpha_n}\mid \alpha=(\alpha_1,\dots ,\alpha_n)\in
\mathbb{N}^n\}$  is $R$-base for $A$, and  $R$ is finitely generated as $\mathbb{K}$-algebra, then there is a finite set of elements $t_1,\dots, t_s\in R$ such that the set $\{t_{i_1}t_{i_2}\cdots t_{i_m}|1\leq i_j\leq s, m\geq 1\} \cup \{1\}$ spans $R$ as
a $\mathbb{K}$-space. Then there is a finite set of elements $t_1,\dots, t_s, x_1, \dots, x_n\in A$ such that the set $\{t_{i_1}t_{i_2}\cdots t_{i_m}x_1^{\alpha_1}\cdots
x_n^{\alpha_n}\mid 1\leq i_j\leq s, m\geq 1,\alpha_1,\dots ,\alpha_n\in
\mathbb{N}\}$ spans $A$ as a $\mathbb{K}$-space. So, if $R$ is generated in degree 1 then $A$ is generated in degree 1.
\item[\rm(iv)] For {\rm (i)}, {\rm (ii)} and {\rm (iii)} above we have that if $R$ is a finitely graded algebra then $A$ is a finitely graded algebra.
\item[\rm (v)] If $R$ is locally finite, $A$ as $\mathbb{K}$-algebra is a locally finite. Indeed, $dim_\mathbb{K}A_0 = dim_\mathbb{K}R_0$, $dim_\mathbb{K}A_1 = dim_\mathbb{K}R_1 + n$; let $\mathcal{B}_t$ be a (finite) base of $R_t$, $t\geq 0$, then for a fixed $p\geq 2$ the set $\{r_tx^{\alpha} \mid t+|\alpha|= p,\  r_t\in B_t \text{  and } x^{\alpha}\in {\rm Mon}(A)\}$ is a finite base  for $A_p$.
\item[\rm (vi)] $A$ as $R$-module  is locally finite.

\item[\rm(vii)] If $A$ is quasi-commutative and $R$ is concentrate in degree 0, then $A_0=R$.
\item[\rm(viii)] If $R$ is a homogeneous quadratic algebra then $A$ is a homogeneous quadratic algebra.

\item[\rm(ix)] If $R$ is finitely presented then $A$ is finitely presented. Indeed: by Proposition \ref{prop.Lez fga},  $R= \mathbb{K}\langle t_1,\dots, t_m\rangle/I$ where
    \begin{equation}\label{rel1.R}
I=\langle r_1,\dots, r_s\rangle
\end{equation}
     is a two-sided ideal of $\mathbb{K}\langle t_1, \dots, t_m\rangle$ generated by a finite set $r_1,\dots, r_s$ of homogeneous polynomials in $\mathbb{K}\langle t_1, \dots, t_m\rangle$. Then $A= \mathbb{K}\langle t_1, \dots, t_m, x_{1}, \dots, x_{n}\rangle/J$ where
 \begin{equation}\label{eq1.ideal J}
 J=\langle r_1,\dots, r_s, \ f_{hk}, \ g_{ji} \mid 1\leq i,j, h\leq n,\  1\leq k\leq m\rangle
 \end{equation}
  is the two-sided ideal of $\mathbb{K}\langle t_1, \dots, t_m, x_{1}, \dots, x_{n}\rangle$ generated by a finite set of homogeneous elements $r_1,\dots, r_s$, $f_{hk}$, $g_{ji}$ where $r_1,\dots, r_s$ are as in  (\ref{rel1.R});
 \begin{equation}\label{eq1.rel xt}
 f_{hk}:= x_{h}t_k-\sigma_{h}(t_k)x_{h}-\delta_{h}(t_k)
 \end{equation}
 with $\sigma_{h}$ and $\delta_{h}$ as in Proposition \ref{sigmadefinition};
  \begin{equation}\label{eq1.rel xx}
 g_{ji}:= x_{j}x_{i}-c_{i,j}x_{i}x_{j} - (r_{0_{j,i}} + r_{1_{j,i}}x_{1} + \cdots + r_{n_{j,i}}x_{n})
 \end{equation}
 as in  (\ref{sigmadefinicion2}) of Definition \ref{def.skewpbwextensions}.

\end{enumerate}
\end{remark}

\begin{remark}\label{rem.grad skew no impl iterat}
The class of  graded iterated Ore extensions $\subsetneqq$ class of graded skew PBW extensions. For example, the homogenized enveloping algebra $\mathcal{A}(\mathcal{G})$ and the Diffusion algebra 2  are graded   skew PBW extension but this is not iterated Ore extensions. Therefore, the definition of graded skew PBW extensions is more general that the  definition of graded Ore extensions.
\end{remark}

\section{Koszul algebras}

Let $A=\mathbb{K}\bigoplus A_1\bigoplus A_2\bigoplus\cdots$ be a locally finite graded algebra  and
$E(A) =\bigoplus_{s,p}E^{s,p}(B) = \bigoplus_{s,p}Ext^{s,p}_A(\mathbb{K}, \mathbb{K})$ the
associated bigraded Yoneda algebra, where $s$ is the cohomology degree  and $-p $ is the internal
degree inherited from the grading on $A$. Let $E^{s}(A) = \bigoplus_{p}E^{s,p}(A)$. $A$  is called
\emph{Koszul} if the following equivalent conditions hold (see \cite{Polishchuk}, Chapter 2,
Definition 1):
\begin{enumerate}
\item[\rm (i)]  $Ext^{s,p}_A(\mathbb{K}, \mathbb{K})=0$ for $s\neq p$;
\item[\rm (ii)] $A$ is one-generated and the algebra  $Ext^*_A (\mathbb{K}, \mathbb{K})$ it is generated by  $Ext_A^{1}(\mathbb{K}, \mathbb{K})$, i.e., $E(A)$ is generated in the first cohomological degree;
\item[\rm (iii)] The module $\mathbb{K}$ admits a \emph{linear free resolution}, i.e., a resolution by free $A$-modules
\[
\cdots \to P_2\to P_1\to P_0\to \mathbb{K}\to 0
\]
such that $P_i$ is generated in degree $i$.
\end{enumerate}

Let $A$ be a graded Ore extension of $R$. Then $A$  is homogeneous quadratic if and only if $R$ is homogeneous quadratic. Furthermore,  $A$ is  Koszul if and  only if $R$ is  Koszul (see \cite{Phan},  Corollary 1.3).

\begin{proposition}\label{iterated Ore}
The graded iterated Ore extension $A := R[x_1; \sigma_1,\delta_1]\cdots [x_{n}; \sigma_{n},\delta_{n}]$ is Koszul if and only if $R$ is Koszul.
\end{proposition}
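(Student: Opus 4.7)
The plan is to proceed by induction on $n$, reducing the statement to the single-variable case already quoted from Phan's Corollary 1.3 immediately before the proposition.

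For the base case $n=1$, the algebra $A = R[x_1;\sigma_1,\delta_1]$ is a graded Ore extension of $R$ in the sense fixed before the proposition, so Phan's result gives directly that $A$ is Koszul if and only if $R$ is Koszul.

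For the inductive step, assume the statement holds for iterated extensions of length $n-1$, and set
\[
B := R[x_1;\sigma_1,\delta_1][x_2;\sigma_2,\delta_2]\cdots [x_{n-1};\sigma_{n-1},\delta_{n-1}].
\]
By the hypothesis that $A$ is a \emph{graded} iterated Ore extension, each $\sigma_i$ is a graded algebra automorphism of the previous stage and each $\delta_i$ is a degree $+1$ graded $\sigma_i$-derivation, so in particular $\sigma_n$ and $\delta_n$ endow $A = B[x_n;\sigma_n,\delta_n]$ with the structure of a graded Ore extension of $B$ (with $x_n$ of degree $1$ and $B$ a connected graded $\mathbb{K}$-algebra generated in degree $1$, which follows by induction since extending by one variable preserves these properties). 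Phan's Corollary 1.3 then yields that $A$ is Koszul if and only if $B$ is Koszul, and the inductive hypothesis gives $B$ Koszul if and only if $R$ is Koszul, closing the induction.

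I do not expect a genuine obstacle here, since the entire content is transporting Phan's one-step equivalence along an iteration; the only thing to check carefully is that $B$ inherits the ``connected $\mathbb{N}$-graded, generated in degree $1$'' hypothesis required to apply Phan's result at each step, which is automatic from the graded Ore extension construction. Thus the proof is essentially a one-line induction invoking the quoted Corollary 1.3 at each stage.
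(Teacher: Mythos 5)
Your proof is correct and follows essentially the same route as the paper: the paper's own argument simply notes that each $\sigma_i$ and $\delta_i$ is graded on the previous stage and then cites Phan's Corollary 1.3, i.e.\ an implicit iteration of the one-variable result. Your version just makes the induction and the preservation of the ``connected, generated in degree 1'' hypotheses explicit, which the paper leaves tacit.
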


\begin{proof}
Suppose
\[
\sigma_i : R[x_1; \sigma_1,\delta_1]\cdots [x_{i-1}; \sigma_{i-1},\delta_{i-1}]  \to R[x_1; \sigma_1,\delta_1]\cdots [x_{i-1}; \sigma_{i-1},\delta_{i-1}]
\]
is a graded algebra automorphism and
\[
\delta_i : R[x_1; \sigma_1,\delta_1]\cdots [x_{i-1}; \sigma_{i-1},\delta_{i-1}](-1) \to R[x_1; \sigma_1,\delta_1]\cdots [x_{i-1}; \sigma_{i-1},\delta_{i-1}]
\]
 is a graded $\sigma_i$-derivation, $2\leq i\leq n$. Let  $A := R[x_1; \sigma_1,\delta_1]\cdots [x_{n}; \sigma_{n},\delta_{n}]$  be the \emph{graded iterated Ore extension} of $R$, where $x_1,\dots, x_n$ have degree 1 in $A$. Then from \cite{Phan}, Corollary 1.3 the  result is clear.

\end{proof}

\begin{proposition}[\cite{LezamaReyes}, Theorem 2.3]\label{1.3.3}
Let $A$ be a quasi-commutative skew PBW extension of a ring $R$.
Then  {\rm (i)} $A$ is isomorphic to an iterated skew polynomial ring, and {\rm (ii)} if $A$ is bijective,  each endomorphism of the skew polynomial ring in {\rm (i)} is an isomorphism.
\end{proposition}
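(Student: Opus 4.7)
The plan is to prove part (i) by induction on the number of variables $n$, constructing the iterated skew polynomial structure one layer at a time. For the base case $n=1$, a quasi-commutative skew PBW extension $A = \sigma(R)\langle x_1\rangle$ has $\delta_1 = 0$ and $x_1 r = \sigma_1(r)x_1$, and by Definition \ref{def.skewpbwextensions}(ii) $A$ is a free left $R$-module with basis $\{x_1^k\}_{k\geq 0}$; this is exactly the Ore extension $R[x_1;\sigma_1]$.

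For the inductive step, let $R_k := R[x_1;\sigma_1][x_2;\bar{\sigma}_2]\cdots[x_k;\bar{\sigma}_k]$ and suppose this iterated Ore extension has been identified with the subring of $A$ generated by $R$ and $x_1,\dots,x_k$. I want to extend $\sigma_{k+1}:R\to R$ to an endomorphism $\bar{\sigma}_{k+1}:R_k\to R_k$ for which $x_{k+1}y = \bar{\sigma}_{k+1}(y)\,x_{k+1}$ holds for all $y\in R_k$. The natural (and essentially forced) choice is $\bar{\sigma}_{k+1}|_R = \sigma_{k+1}$ together with $\bar{\sigma}_{k+1}(x_i) = c_{i,k+1}\,x_i$ for $i\leq k$, extended as a ring homomorphism. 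The key verification is that this prescription is compatible with the defining relations of $R_k$; the critical identity to check is
\[
\sigma_{k+1}(\sigma_i(r))\, c_{i,k+1} \;=\; c_{i,k+1}\, \sigma_i(\sigma_{k+1}(r)), \qquad r\in R,\ i\leq k,
\]
and this is forced by computing $x_{k+1}x_i r$ two ways inside $A$ (once by first applying $x_{k+1}x_i = c_{i,k+1}x_ix_{k+1}$ and once by first applying $x_i r = \sigma_i(r)x_i$) and invoking the uniqueness of representation from Remark \ref{rem.grad f y rep}(i). Once $\bar{\sigma}_{k+1}$ is in hand, $R_{k+1} := R_k[x_{k+1};\bar{\sigma}_{k+1}]$ embeds into $A$, and after $n$ steps the PBW basis of $A$ shows $R_n = A$, establishing (i).

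For (ii), assume $A$ is bijective, so each $\sigma_i\in\Sigma$ is an automorphism of $R$ and each $c_{i,j}$ is invertible in $R$. I claim inductively that each $\bar{\sigma}_k$ is an automorphism of $R_{k-1}$. The candidate inverse is given by $\bar{\sigma}_k^{-1}|_R = \sigma_k^{-1}$ and $\bar{\sigma}_k^{-1}(x_i) = \sigma_k^{-1}(c_{i,k}^{-1})\,x_i$ for $i<k$, extended multiplicatively; a quick check confirms $\bar{\sigma}_k\bar{\sigma}_k^{-1}(x_i) = \bar{\sigma}_k(\sigma_k^{-1}(c_{i,k}^{-1})x_i) = c_{i,k}^{-1}c_{i,k}x_i = x_i$, and the reverse composition is analogous. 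Well-definedness of $\bar{\sigma}_k^{-1}$ as an endomorphism of $R_{k-1}$ reuses the same compatibility identities from part (i).

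The main obstacle is organising the compatibility identities among the $\sigma_i$'s and the $c_{i,j}$'s at each layer of the induction. These identities are all ultimately consequences of associativity of multiplication in $A$ combined with the uniqueness in Remark \ref{rem.grad f y rep}, but threading them through the inductive construction requires care. The quasi-commutativity hypothesis is what keeps the argument clean: because $\delta_i = 0$ and the relation $x_jx_i = c_{i,j}x_ix_j$ carries no lower-order remainder, $\bar{\sigma}_{k+1}$ is a pure endomorphism with no accompanying derivation, so the inductive extension never introduces $\delta$-terms.
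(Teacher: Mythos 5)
Your construction is correct and is essentially the proof of the cited result ([LezamaReyes], Theorem 2.3), which this paper quotes without reproducing: one extends $\sigma_{k+1}$ to $\bar{\sigma}_{k+1}$ on $R_k$ by $\bar{\sigma}_{k+1}(x_i)=c_{i,k+1}x_i$, checks well-definedness via the relations forced by associativity and the uniqueness of representation in ${\rm Mon}(A)$, and in the bijective case writes down the inverse exactly as you do. The only compatibility you leave implicit, that $\bar{\sigma}_{k+1}$ respects $x_jx_i=c_{i,j}x_ix_j$ for $i<j\leq k$ (equivalently $c_{j,k+1}\sigma_j(c_{i,k+1})c_{i,j}=\sigma_{k+1}(c_{i,j})c_{i,k+1}\sigma_i(c_{j,k+1})$), follows by the same two-way computation of $x_{k+1}x_jx_i$ that you use for the coefficient identity, so no genuine gap remains.
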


\begin{proposition}\label{exten Koszul hom}
Let $A$ be a graded quasi-commutative skew PBW extension  of $R$. Then $R$ is a  Koszul algebra if and only if $A$  is Koszul.
\end{proposition}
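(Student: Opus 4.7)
The plan is to combine Proposition \ref{1.3.3} with Proposition \ref{iterated Ore}, so that the only real work is checking that the iterated Ore presentation of $A$ is compatible with the grading. Since a graded skew PBW extension is bijective by Definition \ref{def. graded skew PBW ext}, Proposition \ref{1.3.3} yields an isomorphism
\[
A \;\cong\; R[x_1;\sigma_1,\delta_1][x_2;\sigma_2,\delta_2]\cdots [x_n;\sigma_n,\delta_n]
\]
in which every $\sigma_i$ is an automorphism of the previous layer. Once this is upgraded to a \emph{graded} iterated Ore presentation in which each $x_i$ has degree $1$, each $\sigma_i$ is a graded automorphism, and each $\delta_i$ is a graded $\sigma_i$-derivation of degree $+1$, Proposition \ref{iterated Ore} applied $n$ times immediately gives the equivalence ``$R$ Koszul $\Leftrightarrow$ $A$ Koszul.''

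First I would set up the grading on each intermediate extension $A^{(i)} := R[x_1;\sigma_1,\delta_1]\cdots[x_i;\sigma_i,\delta_i]$ inductively, declaring $x_j$ to have degree $1$ for $1\le j\le i$. Using the decomposition $A=\bigoplus_{p\ge 0}A_p$ given by Proposition \ref{prop.grad A} together with the fact that ${\rm Mon}(A)$ is a left $R$-basis, the subalgebra of $A$ generated by $R$ and $x_1,\dots,x_i$ inherits a connected $\mathbb{N}$-grading with $A^{(i)}_p$ spanned by $\{r_t x_1^{\alpha_1}\cdots x_i^{\alpha_i}\mid t+\alpha_1+\cdots+\alpha_i=p,\ r_t\in R_t\}$. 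Second, I would verify the homogeneity of the structure maps: because $A$ is a graded skew PBW extension and $R$ is graded, relation $x_i r = \sigma_i(r)x_i+\delta_i(r)$ forces $\sigma_i(R_t)\subseteq R_t$ and $\delta_i(R_t)\subseteq R_{t+1}$; the quasi-commutative condition $x_jx_i = c_{i,j}x_ix_j$ with $c_{i,j}\in R_0$ (Definition \ref{def. graded skew PBW ext}(iii) with the right-hand side of degree $\le 0$ in the extra relation since quasi-commutativity kills those extra $R+\sum Rx_k$ terms in degree $<2$) then guarantees that the corresponding maps on $A^{(i-1)}$ are also graded of the required degrees. This is where a small care is needed, so I would write this verification out carefully.

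Third, with the graded iterated Ore presentation in hand, I would invoke Proposition \ref{iterated Ore}. Reading its proof, it is an $n$-fold application of \cite{Phan}, Corollary 1.3 (Koszulity passes through a single graded Ore extension in both directions). Hence $R$ is Koszul iff $A^{(1)}$ is Koszul iff $\cdots$ iff $A^{(n)}=A$ is Koszul, which is the desired equivalence.

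The main obstacle is purely the second step: translating the quasi-commutativity relations (iii$'$) and (iv$'$) of Definition \ref{sigmapbwderivationtype}, which are written internally to $A$ in terms of left multiplication by monomials, into the external statement that the $\sigma_i$ and $\delta_i$ in the iterated Ore tower of Proposition \ref{1.3.3} are graded with the right degree shifts. Once this is set up, Propositions \ref{1.3.3} and \ref{iterated Ore} do all the remaining work and no new Koszul-theoretic argument is required.
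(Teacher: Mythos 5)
Your proposal is correct and follows essentially the same route as the paper: invoke Proposition \ref{1.3.3} (using bijectivity, which is built into Definition \ref{def. graded skew PBW ext}) to present $A$ as an iterated Ore extension, and then apply Proposition \ref{iterated Ore}. The only difference is that you explicitly flag and sketch the verification that this iterated Ore presentation is \emph{graded} (graded $\sigma_i$, degree conditions on the $c_{i,j}$, vanishing $\delta_i$ on $R$ in the quasi-commutative case), a point the paper's proof passes over silently.
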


\begin{proof}
If $A$ is a graded quasi-commutative bijective skew PBW extension of $R$, then by Proposition \ref{1.3.3} $A$  is isomorphic to an iterated graded Ore extension  wherein each endomorphism  is bijective. Then  by Proposition \ref{iterated Ore}, $R$ is Koszul if and only if $A$ is Koszul.
\end{proof}

\section{PBW algebras}

Let $L=\mathbb{K}\langle x_1,\dots, x_n\rangle$ and let $A = \mathbb{K}\langle x_1,\dots,
x_n\rangle/\langle P\rangle$ be a homogeneous quadratic algebra with a fixed generators
$\{x_1,\dots, x_n\}$. For a multindex $\alpha:=(i_1,\dots, i_m)$, where $1\leq i_k\leq n$, we
denote the monomials  $x^{\alpha}:=x_{i_1}x_{i_2}\cdots x_{i_m}\in \mathbb{K}\langle x_1,\dots,
x_n\rangle$. For $\alpha=\emptyset$ we set $x^{\emptyset}:=1$. Now let us equip the subspace $L_2$
with the basis consisting of the monomials $x_{i_1}x_{i_2}$. Let $S^{(1)}:=\{1, 2,\dots,n\}$,
$S^{(1)}\times S^{(1)}$ the cartesian product, then for $P\subseteq L_2$ we obtain the set
$S\subseteq S^{(1)}\times S^{(1)}$ of  pairs of indices $(l, m)$ for which the class of $x_lx_m$ in
$L_2/P$ is not in the span of the classes of $x_rx_s$ with $(r, s) < (l, m)$, where $<$ denotes the
lexicographical order. Hence, the relations in $A$ can be written in the following form (see
\cite{Polishchuk}, Lemma 4.1.1):

\[
x_ix_j =\sum_{\substack{(r,s)<(i,j)\\(r,s)\in S}}c^{rs}_{ij} x_rx_s,\qquad (i, j) \in S^{(1)}\times
S^{(1)}\ \backslash\ S.
\]
Define further $S^{(0)}:= \{\emptyset\}$,  and for $m\geq 2$,
\[
S^{(m)}:= \{(i_1,\dots, i_m) \mid (i_k, i_{k+1}) \in S, \ k = 1,\dots, m - 1\}
\]
and consider the monomials $\{x_{i_1}\cdots x_{i_m} \in A_m \mid  (i_1,\dots, i_m) \in S^{(m)}\}$. Note that these mo\-no\-mials always span $A_m$ as a vector space and  the monomials
\begin{equation}\label{eq.monPBW}
(A,S):=\{x_{i_1}\cdots x_{i_m} \mid  (i_1,\dots, i_m) \in \cup_{m> 0}S^{(m)}\}
\end{equation}
linearly span the entire $A$.  We call $(A,S)$ in (\ref{eq.monPBW}) a \emph{PBW-basis} of $A$ if they are linearly independent and hence form a $\mathbb{K}$-linear basis.  The elements $x_1, \dots, x_n$ are called PBW-\emph{generators} of $A$. A \emph{PBW-algebra} is a homogeneous quadratic algebra admitting a PBW-basis, i.e., there exists a permutation of $x_1,\dots, x_n$ such that the standard monomials in $x_1,\dots, x_n$ conform a $\mathbb{K}$-basis of $A$. In \cite{SuarezReyes2016} we  show that every semi-commutative skew PBW extension of $\mathbb{K}$ is a PBW algebra.

\begin{proposition}\label{prop.R PBW impl A PBW}
Let $A$ be a graded skew PBW extension of  a finitely presented algebra  $R$. If $R$ is a PBW
algebra then $A$ is a PBW algebra.
\end{proposition}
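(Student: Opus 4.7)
Since $R$ is PBW, fix an ordering $t_1<\cdots<t_m$ of its PBW generators and an admissible index set $S_R$ so that every defining relation of $R$ rewrites a non-admissible pair $(t_i,t_j)$ as a $\mathbb{K}$-linear combination of lex-smaller admissible pairs. By Remark \ref{rem.prop of graded skew}(viii)--(ix), $A$ is finitely presented and homogeneous quadratic, with enlarged generating set $\{t_1,\dots,t_m,x_1,\dots,x_n\}$ and ideal of relations $J$ from (\ref{eq1.ideal J}); the relations split into three families: the defining relations of $R$, the relations $f_{hk}$ of (\ref{eq1.rel xt}), and the relations $g_{ji}$ of (\ref{eq1.rel xx}).

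Now extend the ordering to $t_1<\cdots<t_m<x_1<\cdots<x_n$ and let $S_A$ consist of the pairs of $S_R$, all pairs $(t_l,x_k)$, and all pairs $(x_i,x_j)$ with $i\le j$. The corresponding standard monomials for $(A,S_A)$ are precisely the products $u\cdot x^{\alpha}$ with $u$ an $R$-standard monomial and $x^{\alpha}\in\mathrm{Mon}(A)$. I would check that each family in $J$ rewrites a non-admissible pair strictly downward in lex order: the $R$-relations handle $(t_i,t_j)\notin S_R$ by hypothesis; $f_{hk}$ rewrites $(x_h,t_k)$ using that $\sigma_h(t_k)\in R_1$ and $\delta_h(t_k)\in R_2$ (by the grading requirement of Definition \ref{def. graded skew PBW ext}(ii)), so the right-hand side lies in the span of pairs $(t_l,x_h)$ and $(t_l,t_{l'})$, all lex-less because $t_*<x_*$; and $g_{ji}$ for $j>i$ rewrites $(x_j,x_i)$ using that $c_{ij}\in R_0$, $r_{0_{ji}}\in R_2$, $r_{k_{ji}}\in R_1$ (by Definition \ref{def. graded skew PBW ext}(iii)), so the right-hand side lies in the span of $(x_i,x_j)$, $(t_l,t_{l'})$ and $(t_l,x_k)$, again lex-less.

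By iterated rewriting the monomials $u\cdot x^{\alpha}$ span $A$; linear independence is where the skew PBW structure enters. By Definition \ref{def.skewpbwextensions}(ii), $A=\bigoplus_{\alpha\in\mathbb{N}^n}R\cdot x^{\alpha}$ as a left $R$-module, so each element of $A$ has a unique expansion $\sum_{\alpha}r_{\alpha}x^{\alpha}$; writing each $r_{\alpha}$ uniquely in the PBW basis of $R$ gives a unique expansion in terms of the $u\cdot x^{\alpha}$, so these form a $\mathbb{K}$-basis of $A$. Hence $(A,S_A)$ is a PBW basis and $A$ is a PBW algebra. The main obstacle is the careful lex-order bookkeeping for $f_{hk}$ and $g_{ji}$: one must lean on the exact grading conditions of Definition \ref{def. graded skew PBW ext} to ensure the right-hand sides land in the predicted spans. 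Everything else packages cleanly by combining the two uniqueness statements, namely left-$R$-freeness of $A$ and the PBW property of $R$.
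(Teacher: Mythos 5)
Your proposal is correct and follows essentially the same route as the paper: order the generators as $t_1<\cdots<t_m<x_1<\cdots<x_n$, take the admissible pairs to be those of $R$ together with all $(t_l,x_k)$ and $(x_i,x_j)$, $i\le j$ (the paper's $S_t\cup S_{tx}\cup S_x$), so the standard monomials are $u\cdot x^{\alpha}$ with $u$ an $R$-standard monomial, and deduce linear independence by combining the left $R$-freeness of $A$ on ${\rm Mon}(A)$ with the PBW basis of $R$. The only difference is that you spell out the lex-order rewriting for the relations $f_{hk}$ and $g_{ji}$ a bit more explicitly than the paper does, which is consistent with its Remark~\ref{rem.orden in PBW} on the importance of placing the $t$'s before the $x$'s.
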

\begin{proof}

Let $R$ be a finitely presented PBW algebra with PBW generators $t_1,\dots, t_m$.
Then by Proposition \ref{prop.Lez fga}, $R= L^t/I$, where $L^t=\mathbb{K}\langle t_1, \dots,
t_m\rangle$ and
\begin{equation}\label{rel.R}
I=\langle r_1,\dots, r_s\rangle
\end{equation}
 is a two-sided ideal of $\mathbb{K}\langle t_1, \dots, t_m\rangle$ generated by a  finite   set $r_1,\dots, r_s$ of homogeneous polynomials in $\mathbb{K}\langle t_1, \dots, t_m\rangle$ of degree two. Let
 \begin{equation}\label{eq.mon R}
 (R,S_t):= \{t_{i_1}\cdots t_{i_v}\mid   (i_1,\dots, i_{\nu}) \in \cup_{p> 0}S_t^{(p)} \}
 \end{equation}
be a PBW basis of $R$, with $S_t^{(p)}= \{(i_1,i_2,\dots,i_p)\mid (i_k, i_{k+1}) \in S_t, \ k = 1,\dots, p - 1\}$, $S_t^{(1)}:= \{1,2,\dots, m\}$ and $S_t\subseteq S_t^{(1)}\times S_t^{(1)}$ is the set of  pairs of indices $(i_\mu, i_\nu)$ for which the class of $t_{i_\mu}t_{i_\nu}$ in $L_2^t/P$ (where $P$ is the space of relations $r_1,\dots, r_s$)  is not in the span of the classes of $t_rt_s$ with $(r, s) < (i_\mu, i_\nu)$. For $1\leq d\leq s$,
\begin{equation}\label{eq.rel tt}
r_d=t_{i_d}t_{j_d} =\sum_{\substack{(r_d,q_d)<(i_d,j_d)\\(r_d,q_d)\in S_t}} c^{r_dq_d}_{i_dj_d}  t_{r_d} t_{q_d},\qquad (i_d, j_d) \in S_t^{(1)}\times S_t^{(1)}\ \backslash\ S_t.
\end{equation}
 Let $A=\sigma(R)\langle x_{m+1}, \dots, x_{m+n}\rangle$ be a graded skew PBW extension of $R$. As $R\subseteq A$, we have that $A=\mathbb{K}\langle t_1, \dots, t_m, x_{m+1}, \dots, x_{m+n}\rangle/J$ where
 \begin{equation}\label{eq.ideal J}
 J=\langle r_1,\dots, r_s, \ f_{hk}, \ g_{ji} \mid m+1\leq i,j, h\leq m+n,\  1\leq k\leq m\rangle
 \end{equation}
 is the two-sided ideal of $\mathbb{K}\langle t_1, \dots, t_m, x_{m+1}, \dots, x_{m+n}\rangle$ generated
 by a  set $r_1,\dots, r_s, \ f_{hk}, \ g_{ji}$ where $r_1,\dots, r_s$ are as in (\ref{rel.R});
 let
 \begin{equation}\label{eq.rel xt}
 f_{hk}:= x_{m+h}t_k-\sigma_{m+h}(t_k)x_{m+h}-\delta_{m+h}(t_k)
 \end{equation}
 with $\sigma_{m+h}$ and $\delta_{m+h}$ as in Proposition \ref{sigmadefinition};
  \begin{equation}\label{eq.rel xx}
 g_{ji}:= x_{m+j}x_{m+i}-c_{i,j}x_{m+i}x_{m+j} - (r_{0_{j,i}} + r_{1_{j,i}}x_{m+1} + \cdots + r_{n_{j,i}}x_{m+n})
 \end{equation}
is as in  (\ref{sigmadefinicion2}) of Definition \ref{def.skewpbwextensions}. As $A$ is graded skew PBW extension then it is homogeneous quadratic, since $r_1,\dots, r_s$, $f_{hk}$, $g_{ji}$ are homogeneous polynomials of degree two in $\mathbb{K}\langle t_1, \dots, t_m, x_1, \dots, x_n\rangle$. Now, let $S^{(1)}_{tx}:=\{1,\dots, m, m+1, \dots, m+n\}$. From the relations  (\ref{eq.rel xt}) we obtain the set $S_{tx}:= \{(k,l)\mid 1\leq k\leq m, m+1\leq l\leq m+n\}$. From the relations (\ref{eq.rel xx})  we obtain the set $S_{x}:= \{(m+i,m+j)\mid 1 \leq i\leq j\leq n)\}$. From Definition \ref{def.skewpbwextensions}, we have that $R\subseteq A$ and  $A$ is a left free $R$-module. Then, for the $\mathbb{K}$-algebra $A$, we have that

 \begin{equation*}
  S^{(p)}= \{(i_1,\dots, i_k, i_{k+1},\dots, i_p)\mid (i_1,\dots, i_k)\in S_t^{(k)} \text{ and } i_{k+1}\leq \cdots\leq i_p\}\\.
 \end{equation*}
 So,
 \begin{equation}
 (A,S):= \{t_{i_1}\cdots t_{i_k}x_{i_{k+1}}\cdots x_{i_p} \mid (i_1,\dots, i_k, i_{k+1},\dots, i_p)\in \cup_{p> 0}S^{(p)}\}
\end{equation}
span $A$  as a vector space.
As $(R,S_t):= \{t_{i_1}\cdots t_{i_v}\mid   (i_1,\dots, i_{\nu}) \in \cup_{p> 0}S_t^{(p)} \}$ is a $\mathbb{K}$-basis for $R$ and $A$ is a left free $R$-module, with basis the basic elements
\begin{multline*}
\{x^{\alpha}=x_{m+1}^{\alpha_{m+1}}\cdots
x_{m+n}^{\alpha_{m+n}}\mid \alpha=(\alpha_{m+1},\dots ,\alpha_{m+n})\in
\mathbb{N}^n\}\\= \{x_{i_{k+1}}\cdots x_{i_p}\mid  m+1\leq i_{k+1}\leq \cdots\leq i_p\leq m+n\}\cup \{1\},
\end{multline*}
then $(A,S)$   is a PBW basis of $A$.  Therefore $A$  is a PBW algebra.\\

\end{proof}

\begin{remark}\label{rem.orden in PBW}
If in the free algebra $\mathbb{K}\langle x_1, \dots, x_n\rangle$ we fix the set $\{1,2,\dots,
n\}$, we implicitly understand that $x_1<x_2<\cdots<x_n$. For example, for $A = \mathbb{K}\langle
x, y, z\rangle/\langle z^2-xy-yx, zx-xz, zy-yz\rangle$   with $x<y<z$, i.e., $x=x_1$, $y=x_2$,
$z=x_3$, we have that $S^{(1)}=\{1,2,3\}$, $S=\{(1,1), (1,2), (1,3), (2,1), (2,2), (2,3)\}=
S^{(2)}$. Note that $(A,S)$ is not a $\mathbb{K}$-basis for $A$. Indeed: $(2,1,1)$, $(1,1,2) \in
S^{(3)}$ and therefore the classes (nonzero)  of $yx^2$, $x^2y\in (A,S)$, but $yx^2-x^2y= yx^2 +
xyx-x^2y-xyx=(xy+yx)x-x(xy+yx) = z^2x-xz^2=0$, since $xz=zx$ in $A$. Because of $A =
\mathbb{K}\langle x, y, z\rangle/\langle z^2-xy-yx, zx-xz, zy-yz\rangle\cong
\sigma(\mathbb{K}[z]\langle x,y\rangle$ is a graded skew PBW extension of the PBW algebra
$\mathbb{K}[z]$, in this case the Proposition \ref{prop.R PBW impl A PBW} fails. So it is important
the order of the generators of the free algebra $L$ as in the proof of the Proposition \ref{prop.R
PBW impl A PBW}; for the  graded skew PBW extension  $A = \sigma(\mathbb{K}[z]\langle x,y\rangle$
we have that $A=\mathbb{K}\langle z, x, y\rangle/\langle z^2-xy-yx, zx-xz, zy-yz\rangle$, i.e.,
$z=x_1<x=x_2<y=x_3$. In this case we write the relations as $yx= -xy + z^2; xz=zx; yz=zy$, whereby
$(3,2)$, $(2,1)$, $(3,1)\notin S$. So, $S=\{(1,1),(1,2),(1,3),(2,2),(2,3),(3,3)\}$,
$S^{(p)}=\{(i_1,i_2, \dots, i_p)\mid i_1\leq i_2\leq\cdots\leq i_p\}$ and
$(A,S)=\{z^{\alpha_1}x^{\alpha_2}y^{\alpha_3}\mid \alpha_1, \alpha_2, \alpha_3\geq 0\}$ is a PBW
base for $A$.
\end{remark}

\begin{theorem}[\cite{Priddy1970}, Theorem 5.3 ]\label{teo.PBW alg is Koszul} % también Theorem 3.1 pag 95 of (2005)Quadratic algebras.
 If $B$ is a PBW algebra then $B$ is a Koszul algebra.
  \end{theorem}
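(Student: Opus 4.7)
The proof plan is to use the standard Priddy argument via a monomial deformation. Fix a PBW algebra $B=\mathbb{K}\langle x_1,\dots,x_n\rangle/\langle P\rangle$ with PBW basis $(B,S)$ and relations
\[
x_ix_j=\sum_{\substack{(r,s)<(i,j)\\(r,s)\in S}} c^{rs}_{ij}x_rx_s,\qquad (i,j)\in S^{(1)}\times S^{(1)}\setminus S.
\]
The strategy is to compare $B$ with the quadratic monomial algebra
\[
B_0:=\mathbb{K}\langle x_1,\dots,x_n\rangle/\langle x_ix_j\mid (i,j)\notin S\rangle,
\]
prove Koszulity for $B_0$ directly, and transfer it to $B$ by a filtration argument.

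First I would verify that $B_0$ is Koszul. Because all relations are monomial, the sets $S^{(m)}$ index a $\mathbb{K}$-basis of $(B_0)_m$, and one can write down a minimal graded free resolution of $\mathbb{K}$ over $B_0$ with $i$-th term $B_0\otimes \mathbb{K}\langle S^{(i)}\rangle$, differentiated by deleting the first factor; exactness follows from a direct combinatorial contracting homotopy on chains of admissible pairs. Since the $i$-th term lives in internal degree $i$, this is a linear resolution, so $B_0$ is Koszul by condition (iii) of the Koszul definition recalled in Section~3.

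Next I would install on $B$ the increasing filtration $F_\bullet B$ obtained by declaring the monomial $x_{i_1}\cdots x_{i_m}$ to have filtration level equal to its position in the lexicographic order (extended multiplicatively); by the PBW rewriting rules above, the associated graded algebra $\mathrm{gr}_F B$ is a graded quotient of $B_0$. The hypothesis that $(B,S)$ is a PBW \emph{basis} forces $\dim_\mathbb{K} B_m=|S^{(m)}|=\dim_\mathbb{K}(B_0)_m$ for every $m$, whence the surjection $B_0\twoheadrightarrow \mathrm{gr}_F B$ is an isomorphism. Thus $\mathrm{gr}_F B\cong B_0$ is Koszul.

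Finally I would invoke the Koszul deformation principle to descend from $\mathrm{gr}_F B$ to $B$: the filtration induces a convergent spectral sequence of bigraded vector spaces
\[
E_1^{s,p}=\mathrm{Ext}^{s,p}_{\mathrm{gr}_F B}(\mathbb{K},\mathbb{K})\Longrightarrow \mathrm{Ext}^{s,p}_B(\mathbb{K},\mathbb{K}),
\]
and since $\mathrm{gr}_F B$ is Koszul the $E_1$-page is concentrated on the diagonal $s=p$. A bidegree count shows all higher differentials must vanish, so $\mathrm{Ext}^{s,p}_B(\mathbb{K},\mathbb{K})=0$ for $s\ne p$, which is exactly Koszulity by condition (i). The main obstacle I expect is the bookkeeping for this last step: constructing the filtration compatibly with the bar (or Koszul) complex so that the spectral sequence is well-defined and converges, and verifying that the PBW Hilbert-series identity indeed pins down $\mathrm{gr}_F B$ as $B_0$ rather than a proper quotient. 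The combinatorial acyclicity of the monomial resolution in the first step is routine but also needs explicit care with the definition of the contracting homotopy on admissible chains.
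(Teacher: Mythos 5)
The paper itself offers no proof of this theorem: it is quoted from Priddy, with a pointer to Polishchuk--Positselski, where it is obtained from Backelin's distributivity criterion (the same lattice machinery the present paper uses in Section 5). Your plan is the other standard route: filter $B$ so that the associated graded algebra is the quadratic monomial algebra $B_0$, prove $B_0$ is Koszul, and transfer Koszulity through the filtration. Steps 2 and 3 of that plan are essentially sound: the lexicographic filtration (taken degreewise, which is legitimate because the relations are homogeneous of degree 2 and lex order on words of equal length is compatible with concatenation; ``position number extended multiplicatively'' is not quite the right bookkeeping) makes $\mathrm{gr}_F B$ a quotient of $B_0$, the PBW-basis hypothesis gives $\dim_{\mathbb{K}}B_m=|S^{(m)}|=\dim_{\mathbb{K}}(B_0)_m$, so that quotient map is an isomorphism, and in the last step you do not even need the higher differentials to vanish: the abutment in bidegree $(s,p)$ is a subquotient of $E_1^{s,p}$, so off-diagonal vanishing of $E_1$ already forces $\mathrm{Ext}^{s,p}_B(\mathbb{K},\mathbb{K})=0$ for $s\neq p$.

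The genuine gap is in your first step. The minimal linear resolution of $\mathbb{K}$ over the quadratic monomial algebra $B_0=\mathbb{K}\langle x_1,\dots,x_n\rangle/\langle x_ix_j\mid (i,j)\notin S\rangle$ is not indexed by $S^{(i)}$: its $i$-th term is spanned by the monomials $x_{j_1}\cdots x_{j_i}$ in which every consecutive pair $(j_k,j_{k+1})$ is a \emph{forbidden} pair, i.e.\ lies outside $S$ (equivalently, it is indexed by the degree-$i$ monomial basis of the quadratic dual $B_0^{!}$), not by chains inside $S$. With your indexing the complex is not even of the correct size: for $B_0=\mathbb{K}\langle x\rangle/\langle x^2\rangle$ one has $S=\emptyset$, hence $S^{(i)}=\emptyset$ for $i\geq 2$ and your proposed resolution stops, whereas the minimal resolution of $\mathbb{K}$ over this algebra is infinite; conversely, for the free algebra on one generator $S^{(i)}\neq\emptyset$ for every $i$, yet the minimal resolution has length one. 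So no contracting homotopy can exist for the complex as you wrote it. The repair is standard and cheap: either use chains of relation pairs (the complement of $S$), or bypass the explicit resolution altogether by noting that for a monomial algebra all the subspaces $X_i=L_{i-1}PL_{k-i-1}$ of Lemma \ref{lemma.Kosz iff distr} are spanned by monomials, hence the collection is distributive by Proposition \ref{prop.distr base}, so $B_0$ is Koszul. With that correction your filtration argument is a legitimate alternative to the distributivity proof cited by the paper.
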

The proof of the previous theorem can be also found in \cite{Polishchuk}, Theorem 3.1, page 84;
they also exhibit an example of a Koszul algebra which is not a PBW algebra.

\begin{corollary}\label{cor. R PBW impl A homo}

Let $A$ be a graded skew PBW extension of  a finitely presented algebra  $R$. If  $R$ is a PBW
algebra then $A$ is Koszul algebra.
\end{corollary}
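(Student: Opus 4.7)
The plan is to chain together the two preceding results in a single short argument. By hypothesis, $R$ is a finitely presented PBW algebra and $A$ is a graded skew PBW extension of $R$. First I would invoke Proposition \ref{prop.R PBW impl A PBW}, which under exactly these hypotheses shows that $A$ inherits the PBW property; that is, $A$ admits a PBW basis of the form
\[
(A,S) = \{t_{i_1}\cdots t_{i_k}x_{i_{k+1}}\cdots x_{i_p}\},
\]
obtained by combining a PBW basis of $R$ with the ordered monomials ${\rm Mon}(A)$ in the new variables $x_{m+1},\dots,x_{m+n}$. This reduces the problem to showing that a PBW algebra is Koszul.

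Next I would apply Theorem \ref{teo.PBW alg is Koszul} (Priddy's theorem) directly to $A$: every homogeneous quadratic algebra admitting a PBW basis is Koszul. Combining these two steps yields the conclusion that $A$ is Koszul. No further calculations are needed and there is no real obstacle here; the whole content of the corollary has already been absorbed into Proposition \ref{prop.R PBW impl A PBW} and Theorem \ref{teo.PBW alg is Koszul}.

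The only subtlety worth flagging explicitly is the one raised in Remark \ref{rem.orden in PBW}: for the PBW-basis argument to go through, the generators of the free algebra presenting $A$ must be ordered so that the generators $t_1,\dots,t_m$ of $R$ come before the new variables $x_{m+1},\dots,x_{m+n}$. Since Proposition \ref{prop.R PBW impl A PBW} was already proved with exactly this ordering convention, nothing additional needs to be verified, and the corollary follows in two lines.
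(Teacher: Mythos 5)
Your proposal is correct and coincides with the paper's own proof, which likewise deduces the corollary directly from Proposition \ref{prop.R PBW impl A PBW} together with Theorem \ref{teo.PBW alg is Koszul}. Your extra note on the ordering of generators (the issue of Remark \ref{rem.orden in PBW}) is consistent with the convention already built into the proof of that proposition, so nothing further is needed.
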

\begin{proof}
From Proposition \ref{prop.R PBW impl A PBW}  and Theorem \ref{teo.PBW alg is Koszul}.
\end{proof}

\begin{example} Let $R=\mathbb{K}[ t_1, \dots, t_m]$ be the classical polynomial ring. Then from Corollary \ref{cor. R PBW impl A homo}
every graded skew PBW extension of $R$ is  Koszul. Therefore, Examples \ref{ex.grad skew pbw ext}
are Koszul algebras. Also, by Remark  \ref{rem.orden in PBW} and Corollary \ref{cor. R PBW impl A
homo}, we have that   $A=\mathbb{K}\langle z, x, y\rangle/\langle z^2-xy-yx, zx-xz, zy-yz\rangle$
is a Koszul algebra. Note that $A=\mathbb{K}\langle z, x, y\rangle/\langle z^2-xy-yx, zx-xz,
zy-yz\rangle=\sigma(\mathbb{K}[z]\langle x, y\rangle= \mathbb{K}[z][x; \sigma_1,\delta_1][y;
\sigma_2,\delta_2]$ is a graded iterated  Ore extension, where $\sigma_1(z)=z$, $\sigma_2(x)=-x$,
$\delta_1(z)=0$ and $\delta_2(x)=z^2$. So, we also can be use the Proposition \ref{iterated Ore} to
guarantee that $A$ is Koszul.
\end{example}

\begin{remark}

(i) Some of the algebras in Example \ref{ex.grad skew pbw ext} had already been presented by other
authors as Koszul algebras using other characterizations. For example, Smith in \cite{Smith2},
Proposition 12.1, showed that the homogenized enveloping algebra $\mathcal{A}(\mathcal{G})$ is
Koszul.

(ii) The converse of Corollary \ref{cor. R PBW impl A homo} is false.
Indeed, the $\mathbb{K}$-algebra $R$ minimal in the numbers of generators and relations for
algebraically closed field $\mathbb{K}$ with relations $x^2+yz=0$ and $x^2+azy=0$, $a\neq 0,1$, is
Koszul but $R$ is not a PBW algebra (see \cite{Polishchuk}, Example of page 84). The associated
graded Ore extension $A:=R[u]$ is Koszul  algebra (\cite{Phan}, Corollary 1.3) and graded skew PBW
extension.

(iii) Let $R$ as in the part (ii) above. Note that $A=R[u]\cong \mathbb{K}\langle x,y,z,u\rangle/\langle x^2+yz, x^2+azy, ux-xu,uy-yu,uz-zu\rangle$,
with $a\neq 0,1$. So, $x<y<z<u$ and \linebreak $S=\{(1,1),(1,2), (1,3),(1,4), (2,1),(2,2), (2,4)\}$. Therefore
$(1,1,2)$, $(2,1,1)\in S^{(3)}$ and $x^2y$, $yx^2$ are nonzero monomials in $A$, but
$a^{-1}yx^2+x^2y= yzy- yzy=0$. Then $(A,S)$ is not a PBW basis, i.e., $A$ is not a PBW algebra. So, if  $A$ is a  graded skew PBW
extension of the Koszul algebra $R$  does not imply that $A$ is PBW algebra.

(iv) With the above reasoning we have that not any graded skew PBW extension is a $PBW$ algebra.

(v) We have also that not all graded skew PBW extension are Koszul. Indeed, let $R=\mathbb{K}\langle
x,y\rangle/\langle y^2-xy, y^2\rangle$ be a homogeneous quadratic non-Koszul algebra
(\cite{Cassidy2008}, page 10), then $R[u]$ is a non-Koszul  associate graded Ore extension of $R$,
which is also a graded skew PBW extension.

\end{remark}

\section{Lattices}

A \emph{lattice} is a discrete set $\Omega$ endowed with two idempotent (i.e., $a \cdot a = a$)
commutative, and associative binary operations $\wedge, \vee: \Omega \times \Omega\to \Omega$
satisfying the following \emph{absorption identities}: $a \wedge (a \vee b)= a$, $(a\wedge b)\vee
b=b$. A lattice is called \emph{distributive} if it satisfies the following distributivity
identity: $a\wedge (b \vee c) = (a\wedge b) \vee (a\wedge c)$. Let $W$ be a vector space. The set
$\Omega_W$ of all its linear subspaces is a lattice with respect to the operations of sum and
intersection. Given $X_1, \dots, X_z$ subspaces of a vector space $W$, we may consider the
\emph{sublattice} of subspaces of $W$ \emph{generated} by $X_1,\dots, X_z$ by the operations of
intersection and summation.  We will say that a \emph{collection of subspaces} $X_1,\dots, X_z
\subseteq W$ \emph{is distributive} if it generates a distributive lattice of subspaces of $W$.

\begin{proposition}[\cite{Polishchuk}, Proposition 1-7.1]\label{prop.distr base} Let $W$ be a vector space and $X_1, \dots, X_z\subseteq W$ be a collection
of its subspaces. Then the following conditions are equivalent:
\begin{enumerate}
\item[\rm(i)] the collection $X_1,\dots, X_z$ is distributive;
\item[\rm(ii)] there exists a direct sum decomposition $W=\bigoplus_{j\in J}W_j$ of the vector space $W$ such that each of the subspaces $X_i$ is the sum of a set of subspaces $W_j$.
\item[\rm(iii)] there exists a basis $\mathcal{B}=\{w_i\ \mid\ i\in I \}$ of the vector space $W$ such that each of
the subspaces $X_i$ is the linear span of a set of vectors $w_i$.
\item [\rm(iv)] there exists a basis $\mathcal{B}$ of the vector space $W$ such that  $\mathcal{B}\cap X_i$ is a basis  of
the subspace $X_i$ for each $1\leq i\leq z$ (\cite{Backelin1}, Lemma 1.2).
\end{enumerate}
\end{proposition}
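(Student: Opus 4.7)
The plan is to establish the easy equivalences among (ii), (iii), and (iv), then the implication (iii) $\Rightarrow$ (i) from Boolean lattice considerations, and finally the substantive direction (i) $\Rightarrow$ (ii) by induction on $z$. First I would dispatch (iii) $\Leftrightarrow$ (iv): given a basis $\mathcal{B}$ and subsets $\mathcal{B}_i \subseteq \mathcal{B}$ with $X_i = \mathrm{span}(\mathcal{B}_i)$, linear independence forces $\mathcal{B}_i = \mathcal{B} \cap X_i$. For (ii) $\Rightarrow$ (iii), take the union of a basis of each summand $W_j$; for (iii) $\Rightarrow$ (ii), partition $\mathcal{B}$ by the map $w \mapsto T(w) := \{i : w \in X_i\}$ and let $W_T$ be the span of the part labelled $T$, so that $W = \bigoplus_T W_T$ and each $X_i = \sum_{T \ni i} W_T$.

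Next I would prove (iii) $\Rightarrow$ (i). Given $\mathcal{B}$ as in (iii) with $X_i = \mathrm{span}(\mathcal{B}_i)$, the map $S \mapsto \mathrm{span}(S)$ from the power set of $\mathcal{B}$ to $\Omega_W$ carries $\cap$ and $\cup$ to $\wedge$ and $\vee$, hence sends the sublattice of $2^{\mathcal{B}}$ generated by the $\mathcal{B}_i$ onto the sublattice of $\Omega_W$ generated by the $X_i$. Since $2^{\mathcal{B}}$ is Boolean, every sublattice is distributive, so the lattice generated by the $X_i$ is distributive.

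For the main implication (i) $\Rightarrow$ (ii), I would induct on $z$. The base case $z=1$ is immediate: take any complementary subspace. For the inductive step, observe that the sublattice generated by $X_1,\dots,X_{z-1}$ sits inside the distributive lattice generated by all $z$ subspaces and so is distributive; applying the hypothesis inductively gives a decomposition $W = \bigoplus_T W_T$ (indexed by subsets $T \subseteq \{1,\dots,z-1\}$) with $X_i = \bigoplus_{T \ni i} W_T$ for $i < z$. To incorporate $X_z$, I would refine each block via $W_T = (W_T \cap X_z) \oplus W_T'$ using any chosen complement $W_T'$, producing a finer direct sum decomposition of $W$.

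The main obstacle is verifying that the refined decomposition satisfies $X_z = \bigoplus_T (W_T \cap X_z)$, so that $X_z$ is genuinely a sum of the new summands and not merely contains it. This is precisely where the distributivity of the full lattice, rather than only of the sublattice generated by $X_1,\dots,X_{z-1}$, is indispensable: one must use the identity $X_z \cap (A+B) = (X_z \cap A) + (X_z \cap B)$ for elements of the lattice and argue that it propagates through the inductive decomposition. The subtlety is that the $W_T$ are not canonical elements of the original lattice, so the distributive identity has to be applied to the lattice-theoretic pieces $U_T = \bigcap_{i \in T} X_i$ first and then transferred to the chosen direct-sum refinement. Once this compatibility is established, choosing bases of the pieces $W_T \cap X_z$ and of $W_T'$ yields the required decomposition of (ii), and the equivalence with (iv) noted by Backelin in \cite{Backelin1} is already covered by the initial step.
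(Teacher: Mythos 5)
First, note that the paper itself offers no proof of this proposition: it is quoted verbatim from \cite{Polishchuk}, Proposition 1-7.1, with item (iv) attributed to \cite{Backelin1}, Lemma 1.2. So your attempt has to be judged against the standard proofs in those sources. Your handling of the easy parts is correct: (iii)$\Leftrightarrow$(iv) via linear independence, (ii)$\Leftrightarrow$(iii) via the type partition $w\mapsto T(w)=\{i \mid w\in X_i\}$, and (iii)$\Rightarrow$(i) by pushing the Boolean lattice $2^{\mathcal{B}}$ forward along $S\mapsto \mathrm{span}(S)$ are all standard and sound.

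The genuine gap is in the inductive step of (i)$\Rightarrow$(ii), which is the whole content of the proposition. From the inductive hypothesis you obtain \emph{some} decomposition $W=\bigoplus_T W_T$ adapted to $X_1,\dots,X_{z-1}$, and you then refine each block by an arbitrary complement of $W_T\cap X_z$ in $W_T$. But $X_z=\bigoplus_T (W_T\cap X_z)$ is simply false for an arbitrary such decomposition, not merely unverified: take $z=2$, $W=\mathbb{K}^2$, $X_1=\langle e_1\rangle$, $X_2=\langle e_2\rangle$ (any two subspaces form a distributive collection). Your base case allows the decomposition $W=\langle e_1\rangle\oplus\langle e_1+e_2\rangle$ adapted to $X_1$, and then both blocks meet $X_2$ in $0$, so no refinement of this decomposition can be adapted to $X_2$. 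The distributive identity $X_z\wedge(A\vee B)=(X_z\wedge A)\vee(X_z\wedge B)$ is available only for $A,B$ in the lattice generated by $X_1,\dots,X_z$; the blocks $W_T$ (and the complements $W_T'$) are not elements of that lattice, so the identity cannot be ``transferred'' to your chosen refinement --- this is exactly the obstacle you flag, and flagging it does not close it. A correct argument must construct the decomposition with $X_z$ in view from the start: the proofs of \cite{Backelin1}, Lemma 1.2 and \cite{Polishchuk}, Proposition 1-7.1 run the induction so that the adapted basis (or decomposition) is built inside prescribed lattice elements --- e.g.\ first adapting to the collection $X_i\cap X_z$ inside $X_z$ and to the images of the $X_i$ in $W/X_z$, and then using distributivity of the full lattice to check that the lifted pieces glue --- equivalently, one strengthens the inductive statement so that the summands $W_T$ can be chosen compatibly with a further distinguished subspace. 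As written, your proposal establishes the peripheral equivalences but not the main implication.
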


Let $A=\mathbb{K}\langle x_1,\dots, x_n\rangle/I$, where $I$ is a two-sided ideal generated by
homogeneous elements and let $A_+ =\bigoplus_{p>0}A_p$. The \emph{lattice associated} to $A$,
$\Omega(A)$ is the  lattice generated by $\{A_+^{\lambda}I^{\mu}A_+^{\nu}\ \mid \
\lambda,\mu,\nu\geq 0\}\subseteq \{\text{ Subspaces of } \mathbb{K}\langle x_1,\dots,
x_n\rangle\}$, where $I^0= \mathbb{K}\langle x_1,\dots, x_n\rangle$, $I^1=I$; $I^2=\{\sum xy\ \mid
\ x,y\in I\}$, $\cdots$. The lattice generated by $\{A_+^{\lambda}I^{\mu}A_+^{\nu}\ \mid \
\lambda,\mu,\nu\geq 0,  \lambda +\mu +\nu=j\}$ is denoted by $\Omega_j(A)$. Backelin in
\cite{Backelin1} shows that  $A$ is Koszul if and only if $A$ is quadratic and $\Omega(A)$ is
distributive and that $\Omega(A)$ is distributive if and only if for all $j\geq 2$, $\Omega_j(A)$
is distributive. So, $A$ is Koszul if and only if $A$ is quadratic and $\Omega_j(A)$ is
distributive,  for all $j$. As a consequence of this, Polishchuk and Positselski in
\cite{Polishchuk} show the following criteria for Koszulness.

\begin{lemma}[\cite{Polishchuk}, Theorem 2-4.1]\label{lemma.Kosz iff distr} A homogeneous quadratic algebra
$A=L/\langle P\rangle$ $(L=\mathbb{K}\langle x_1,\dots, x_n\rangle)$ is Koszul if and only if for
all $k\geq 0$, the collection of subspaces
\begin{equation}\label{eq.Xi}
X_i:=  L_{i-1}PL_{k-i-1}\subset L_k,\ i=1,\dots, k-1
\end{equation}
is distributive.
\end{lemma}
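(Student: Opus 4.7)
The plan is to derive this statement as a reformulation of Backelin's criterion recalled just before the lemma: $A$ is Koszul if and only if $A$ is quadratic and, for every $j\geq 2$, the lattice $\Omega_j(A)$ is distributive. Since each $X_i = L_{i-1}PL_{k-i-1}$ is visibly one of the degree-$k$ generators of $\Omega_k(A)$ (take $\lambda=i-1$, $\mu=1$, $\nu=k-i-1$), the task is to show that the sublattice of $L_k$ generated by $X_1,\dots,X_{k-1}$ already captures all of $\Omega(A)$ in degree $k$, so that distributivity of the two sublattices is equivalent.

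I would proceed in three steps. First, observe that every subspace appearing in $\Omega(A)$ is $\mathbb{N}$-graded as a subspace of $L$, so by Proposition~\ref{prop.distr base} distributivity of $\Omega(A)$ is equivalent to distributivity of the sublattice in each graded component $L_k$ separately. Second, using the quadratic assumption $P\subseteq L_2$ and $I=\sum_{a,b\geq 0}L_a P L_b$, expand $I^{\mu}$ to see that the degree-$k$ component of any generator $A_+^{\lambda}I^{\mu}A_+^{\nu}$ of $\Omega_j(A)$ is a finite sum of iterated products of the form $L_{a_1}PL_{a_2}P\cdots PL_{a_{\mu+1}}$ with $a_1+\cdots+a_{\mu+1}+2\mu=k$ and prescribed lower bounds on the $a_i$. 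Third, check that each such iterated product, viewed as a subspace of $L_k$, equals a finite intersection of the simpler subspaces $X_i$. Together these three steps identify the sublattice of $L_k$ generated by the degree-$k$ traces of $\Omega(A)$ with the sublattice generated by $X_1,\dots,X_{k-1}$, and the equivalence with Backelin's criterion then yields the lemma.

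The main obstacle is the third step: verifying that an iterated product $L_{a_1}PL_{a_2}P\cdots PL_{a_{\mu+1}}$ inside $L_k$ decomposes as an intersection of the two-term pieces $X_i$. Heuristically this holds because each factor $P$ imposes a quadratic constraint on one consecutive pair among the $k$ positions of a word in $L_k$, and independent constraints on disjoint position pairs should translate into honest set-theoretic intersections of subspaces. Making this precise is best done by exhibiting a common monomial-style basis compatible with all of the $X_i$ simultaneously, i.e.\ using the basis characterization of Proposition~\ref{prop.distr base}(iv); this reduces the distributivity check for $\Omega_k(A)$ to the concrete combinatorics of index sets on $k$ letters. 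Once this combinatorial identification is established, the remainder is bookkeeping to package everything in the language of Backelin's theorem.
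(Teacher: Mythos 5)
The paper itself gives no proof of this lemma: it is quoted from \cite{Polishchuk} (Chapter~2, Theorem~4.1), where it is proved homologically via the (co)homology of distributive collections of subspaces and the bar/Koszul complex, and the paper merely frames it as a consequence of Backelin's criterion. Your proposal supplies exactly such a derivation from Backelin's theorem as recalled before the lemma, so it is closer in spirit to \cite{Backelin1} than to the argument in \cite{Polishchuk}; judged on its own it is sound in outline. Both directions work as you describe: each $X_i$ is the degree-$k$ component of the generator $L_+^{i-1}IL_+^{k-i-1}$ of $\Omega(A)$, and taking degree-$k$ components is a lattice homomorphism on graded subspaces, so Koszulity (hence distributivity of $\Omega(A)$) forces distributivity of the collection $X_1,\dots,X_{k-1}$; conversely, the degree-$k$ component of any $L_+^{\lambda}I^{\mu}L_+^{\nu}$ is a sum of products $L_{a_0}PL_{a_1}P\cdots PL_{a_{\mu}}$ in which the $\mu$ placements of $P$ occupy pairwise disjoint consecutive pairs of positions, and each such product is an intersection of the $X_i$.

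What still has to be nailed down. First, your step~3 is the real content and is left heuristic: the needed identity is the standard fact that for subspaces $S\subseteq W$ and $T\subseteq W'$ one has $(S\otimes W')\cap (W\otimes T)=S\otimes T$, iterated over the disjoint $P$-placements (by induction, or by choosing compatible bases); this is true, but it must be proved, and one should note it would fail for overlapping placements, so you must check that only disjoint placements occur when expanding $I^{\mu}$ --- they do, since $I=\sum_{a,b}L_aPL_b$ and $P\subseteq L_2$. Second, your step~1 is not really an application of Proposition~\ref{prop.distr base}, which concerns a finite collection; the correct justification is the elementary observation that sums and intersections of graded subspaces of $L$ are computed degreewise, so each instance of the distributive law (involving finitely many generators) can be verified in each $L_k$ separately. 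Third, the generators with $\mu=0$ contribute $L_k$ (and degenerate cases contribute $0$) in degree $k$, and these need not lie in the lattice generated by $X_1,\dots,X_{k-1}$; adjoining the top and bottom subspaces preserves distributivity by the basis criterion of Proposition~\ref{prop.distr base}, but this should be said. Finally, for your expansion to parse one must read the paper's $A_+$ in the definition of $\Omega(A)$ as $L_+\subseteq L$ (the preimage of the augmentation ideal), as in \cite{Backelin1}, and note that the paper's $\Omega_j(A)$ is sliced by $\lambda+\mu+\nu=j$, not by internal degree, so your degreewise slicing bypasses it --- which is fine, since you only use the statement that Koszulity is equivalent to quadraticity plus distributivity of $\Omega(A)$. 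With these points supplied your reduction is a complete proof, though it of course takes Backelin's theorem as a black box, which is where the substantial work lies.
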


Let $R=\mathbb{K}\langle t_1, \dots, t_n\rangle/I$ be a homogeneous quadratic algebra. Note that
$\Omega(R)$  only depend on the subalgebra of $R$ generated by those of the generators of $R$ which
\textquotedblleft appear\textquotedblright\ in a set of minimal relations for $R$.

\begin{lemma}[\cite{Backelin1}, Lemma 2.3]\label{lema. R distr sii A dist}
Let $R=\mathbb{K}\langle t_1, \dots, t_n\rangle/I$ be a quadratic algebra and let
$$A=\mathbb{K}\langle t_1,\dots, t_m, x_1,\dots, x_n\rangle/\langle I\rangle,$$ where $\langle
I\rangle$ is the two-sided ideal of $\mathbb{K}\langle t_1,\dots, t_m, x_1,\dots, x_n\rangle$
generated by $I$. Then  $\Omega(R)$ is distributive if and only if $\Omega(A)$ is distributive.
\end{lemma}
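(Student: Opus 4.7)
The plan is to apply Proposition \ref{prop.distr base}(iii)--(iv), which reduces distributivity of a lattice of subspaces to the existence of a common \emph{compatible basis} of the ambient space, and then to exploit the fact that the generators of $\langle I\rangle$ in the free algebra $L^A:=\mathbb{K}\langle t_1,\dots, t_m, x_1,\dots, x_n\rangle$ lie entirely in $L^R:=\mathbb{K}\langle t_1,\dots, t_m\rangle$. In other words, once a word in $L^A$ is read as an alternating sequence of $t$-blocks and single $x$-letters, the ideal $\langle I\rangle$ operates only inside the $t$-blocks.

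First I would introduce a \emph{shape decomposition} of each homogeneous component $L^A_k$. Every monomial $w$ of length $k$ factors uniquely as $w=u_0\,x_{j_1}\,u_1\,x_{j_2}\cdots x_{j_s}\,u_s$, with each $u_\ell$ a (possibly empty) word in the $t$-letters. Collecting monomials with a given shape $\pi=(k_0,j_1,k_1,\dots,j_s,k_s)$, where $k_\ell=|u_\ell|$ and $k_0+\cdots+k_s+s=k$, yields the direct-sum decomposition
\[
L^A_k=\bigoplus_{\pi} W_\pi,\qquad W_\pi\;\cong\; L^R_{k_0}\otimes L^R_{k_1}\otimes\cdots\otimes L^R_{k_s}.
\]

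Second, I would show that each generator $(L^A_+)^\lambda\langle I\rangle^\mu (L^A_+)^\nu$ of $\Omega_{\lambda+\mu+\nu}(A)$ respects this decomposition and, shape by shape, equals a sum of tensor products built from generators of $\Omega(R)$. Because $I$ contains no $x_j$, any occurrence of $\langle I\rangle$ inside a product $u\,r\,v$ must lie in a single $t$-block $u_\ell$. Hence the trace of such a generator on $W_\pi$ is the sum, over all distributions of the weight $(\lambda,\mu,\nu)$ among the $s+1$ blocks and the $s$ separating $x$-letters, of tensor products whose $\ell$-th factor is either all of $L^R_{k_\ell}$ or a generator $(L^R_+)^{\lambda_\ell}I^{\mu_\ell}(L^R_+)^{\nu_\ell}$ of $\Omega(R)$.

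Finally I would close with the basis criterion. If $\Omega(R)$ is distributive, Proposition \ref{prop.distr base}(iv) furnishes for every $p\geq 0$ a basis $\mathcal{B}_p$ of $L^R_p$ compatible with $\Omega(R)$; assembling the tensor bases $\mathcal{B}_{k_0}\otimes\cdots\otimes\mathcal{B}_{k_s}$ across shapes $\pi$ gives a basis of $L^A_k$ compatible with every generator of $\Omega(A)$, so $\Omega(A)$ is distributive. Conversely, a basis of $L^A_k$ compatible with $\Omega(A)$ restricts on the all-$t$ shape $\pi=(k)$ (where $W_\pi=L^R_k$) to a basis compatible with $\Omega(R)$, because the traces on that shape of the $\Omega(A)$-generators are exactly the $\Omega(R)$-generators. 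The main obstacle is the second step: the careful bookkeeping of how $(\lambda,\mu,\nu)$ splits across blocks and $x$-letters, together with the (combinatorial) verification that the sublattice of $W_\pi$ generated by these tensor products admits a compatible basis whenever each factor does, so that distributivity is transported through the tensor structure.
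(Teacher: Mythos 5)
The first thing to note is that the paper does not prove this lemma at all: it is imported verbatim from Backelin's thesis (\cite{Backelin1}, Lemma 2.3) and used as a black box, so there is no in-paper argument to compare yours against; what you have written is a reconstruction. Your strategy is sound. The shape decomposition $L^A_k=\bigoplus_\pi W_\pi$ with $W_\pi\cong L^R_{k_0}\otimes\cdots\otimes L^R_{k_s}$ is legitimate, and the key point you identify is correct: since every homogeneous $r\in I$ is a combination of pure $t$-words of one length, every spanning element of $(L^A_+)^\lambda\langle I\rangle^\mu(L^A_+)^\nu$ lies in a single $W_\pi$, so each lattice generator is the direct sum of its traces on the $W_\pi$; and for a fixed shape and a fixed distribution of the $\mu$ copies of $I$ among the $t$-blocks, the trace is a tensor product whose factors are degree components of subspaces $(L^R_+)^a I^b(L^R_+)^c$, the prefix and suffix conditions from $(L^A_+)^\lambda$ and $(L^A_+)^\nu$ only shifting the exponents $a,c$ in the blocks containing the first and last copy of $I$. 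Granting that bookkeeping, the forward implication works exactly as you say: compatible bases $\mathcal{B}_p$ of the $L^R_p$ (Proposition \ref{prop.distr base}(iv)) give product bases of the $W_\pi$ with which every trace, hence every generator, is compatible, and Proposition \ref{prop.distr base}(iii) yields distributivity of $\Omega(A)$.

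The step that is genuinely too quick is the converse. A basis of $L^A_k$ compatible with the $\Omega(A)$-generators need not respect the shape decomposition (its vectors may mix shapes), so it does not literally ``restrict'' to the all-$t$ component $W_{(k)}=L^R_k$; and in general intersecting a distributive collection with a subspace destroys distributivity (the three coordinate planes of $\mathbb{K}^3$ form a distributive collection, yet they cut the plane $a+b+c=0$ in three distinct lines of that plane, a non-distributive configuration). What rescues the argument is precisely the splitting you already established: every generator $X$ satisfies $X\cap L^A_k=(X\cap L^R_k)\oplus(X\cap W')$ with $W'=\bigoplus_{\pi\neq(k)}W_\pi$, and for subspaces split in this way both sums and intersections are computed componentwise; hence $X\mapsto X\cap L^R_k$ is a lattice homomorphism from the sublattice they generate onto the lattice generated by the traces, which are exactly the degree-$k$ parts of the $\Omega(R)$-generators, and a homomorphic image of a distributive lattice is distributive. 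With that replacement (or some equivalent device, such as adjoining $L^R_k$ and $W'$ to the collection and rechecking compatibility) your sketch becomes a complete proof; the remaining work is only the combinatorial bookkeeping in your second step, which you correctly flag and which does go through.
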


\begin{lemma}\label{lemma.R Kosz iff exten Koszul} A quadratic algebra $R=\mathbb{K}\langle t_1, \dots, t_m\rangle/I$ is Koszul if and only if $$A=\mathbb{K}\langle t_1,\dots, t_m, x_1,\dots, x_n\rangle/\langle I\rangle$$ is Koszul, where $\langle I\rangle$ is the two-sided ideal of $\mathbb{K}\langle t_1,\dots, t_m, x_1,\dots, x_n\rangle$ generated by $I$.
\end{lemma}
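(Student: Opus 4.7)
The plan is to reduce this statement directly to the preceding Backelin-type lattice criterion (Lemma \ref{lema. R distr sii A dist}) combined with the Koszulity-via-distributivity criterion discussed just before Lemma \ref{lemma.Kosz iff distr}. The key observation is that passing from $R$ to $A$ only amounts to adjoining extra free generators $x_1,\dots,x_n$ that do not appear in any relation; the defining ideal is literally the same set $I$, only enlarged into a bigger free algebra. In particular, $A$ is quadratic whenever $R$ is: the minimal homogeneous relations of $A$ are precisely the (quadratic) minimal homogeneous relations of $R$, so $I\subseteq L_2$ for $L=\mathbb{K}\langle t_1,\dots,t_m\rangle$ gives $\langle I\rangle\subseteq \widetilde L_2$ for $\widetilde L=\mathbb{K}\langle t_1,\dots,t_m,x_1,\dots,x_n\rangle$.

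First, I would explicitly record the quadraticity equivalence: $R$ is quadratic (in the sense used in the paper) iff $A$ is quadratic, which follows immediately from the shape of $\langle I\rangle$ inside $\widetilde L$. Second, I would invoke the Backelin criterion already cited in the paragraph preceding Lemma \ref{lemma.Kosz iff distr}: a homogeneous quadratic algebra $B$ is Koszul if and only if its associated lattice $\Omega(B)$ is distributive. Applying this to both $R$ and $A$ translates the desired biconditional into the purely lattice-theoretic statement
\[
\Omega(R)\ \text{is distributive}\ \iff\ \Omega(A)\ \text{is distributive}.
\]

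But this last equivalence is exactly the content of Lemma \ref{lema. R distr sii A dist} (Backelin's Lemma 2.3), which we are allowed to quote. Chaining these equivalences gives: $R$ Koszul $\iff$ $R$ quadratic and $\Omega(R)$ distributive $\iff$ $A$ quadratic and $\Omega(A)$ distributive $\iff$ $A$ Koszul.

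There is no real obstacle here; the only point that needs a sentence of care is the verification that quadraticity is preserved in both directions when one adjoins or removes the free variables $x_1,\dots,x_n$ without adding any new relations. Once that is in place, the result is purely an application of Lemma \ref{lema. R distr sii A dist} and the distributive-lattice characterization of Koszulity.
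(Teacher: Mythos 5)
Your proposal is correct and follows essentially the same route as the paper: quadraticity is clearly preserved when the free variables $x_1,\dots,x_n$ are adjoined without new relations, Lemma \ref{lema. R distr sii A dist} transfers distributivity between $\Omega(R)$ and $\Omega(A)$, and the distributivity characterization of Koszulity (Backelin's criterion, invoked in the paper via Lemma \ref{lemma.Kosz iff distr}) closes the chain of equivalences exactly as in the paper's own proof.
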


\begin{proof} Note that $R$ is quadratic if and only if $A$ is quadratic.  Also, by Lemma \ref{lema. R distr sii A dist},  $\Omega(R)$ is distributive if and only if  $\Omega(A)$ is distributive. Therefore, by Lemma \ref{lemma.Kosz iff distr}, $R$ is Koszul if and only if $A$ is Koszul.
\end{proof}

Related to Proposition \ref{iterated Ore} we have the following theorem.

\begin{theorem}
If $A=\sigma(R)\langle x_1, \dots, x_n\rangle$ is a graded skew PBW extension of a finitely
presented Koszul algebra $R$, then  $A$ is Koszul.
\end{theorem}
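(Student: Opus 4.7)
The plan is to reduce the Koszulity of $A$ to that of its already-Koszul quasi-commutative companion, via the lattice criterion of Lemma \ref{lemma.Kosz iff distr} combined with a Koszul-deformation argument. First I verify that $A$ fits the hypotheses of these criteria: by Remark \ref{rem.prop of graded skew}\,(iv), (viii) and (ix), together with the Koszulity (hence quadraticity) of $R$, the algebra $A$ is finitely presented, finitely graded, and homogeneous quadratic. Writing $A = L/\langle P\rangle$ with $L = \mathbb{K}\langle t_1,\dots,t_m,x_1,\dots,x_n\rangle$, the relation space $P\subseteq L_2$ is the sum of the $r_d$ from (\ref{rel1.R}), the commutation relations $f_{hk}$ from (\ref{eq1.rel xt}) and the PBW-style relations $g_{ji}$ from (\ref{eq1.rel xx}).

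Next I attach to $A$ its quasi-commutative companion $A^{qc}$: the graded quasi-commutative skew PBW extension of $R$ obtained by setting $\delta_h\equiv 0$ and discarding the scalar/linear tails of the $g_{ji}$, so that $A^{qc} = L/\langle P^{qc}\rangle$ with $P^{qc}$ spanned by $r_1,\dots,r_s$, $x_h t_k-\sigma_h(t_k)x_h$ and $x_j x_i - c_{ij}x_ix_j$. By Proposition \ref{exten Koszul hom}, $A^{qc}$ is Koszul, so $\Omega(A^{qc})$ is distributive. I then endow $L$ (and hence $A$) with the filtration $F^{\bullet}$ assigning $x$-weight $0$ to each $t_k$ and $x$-weight $1$ to each $x_j$; the leading parts of $P$ with respect to $F^{\bullet}$ are exactly $P^{qc}$, while the PBW basis $\{r\cdot x^{\alpha}\}$ from Proposition \ref{prop.grad A} together with Remark \ref{rem.grad f y rep} forces the dimension match $\dim_{\mathbb{K}} A_p = \dim_{\mathbb{K}} A^{qc}_p$ for all $p$, so $\mathrm{gr}_F A \cong A^{qc}$.

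To finish I would invoke the Koszul deformation principle: a homogeneous quadratic algebra whose associated graded, with respect to a Hilbert-series-preserving filtration, is Koszul, must itself be Koszul. In lattice language (Proposition \ref{prop.distr base} together with Lemma \ref{lemma.Kosz iff distr}), a distributive basis of $L_k$ witnessing the distributivity of $\Omega_k(A^{qc})$ can be lifted, after adding the lower-$x$-weight corrections prescribed by the $\delta_h(t_k)$ and the tails of the $g_{ji}$, to a distributive basis of $L_k$ witnessing the distributivity of $\Omega_k(A)$ for each $k\ge 2$. Thus $\Omega(A)$ is distributive and $A$ is Koszul.

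The main obstacle is exactly this lifting step: one must check that the tail contributions $\delta_h(t_k)$ and $r_{0_{j,i}}+\sum_{\ell} r_{\ell_{j,i}} x_\ell$ are expressible in terms of basis vectors of strictly smaller $x$-weight, so that the subspaces $L_{i-1}PL_{k-i-1}$ differ from $L_{i-1}P^{qc}L_{k-i-1}$ only by unitriangular corrections that preserve the distributive basis structure. The bi-grading on $A$ inherited from Proposition \ref{prop.grad A}, which pins these tail contributions into strictly lower filtration, is what drives this step.
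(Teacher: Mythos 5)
Your reduction to the quasi-commutative companion is a genuinely different route from the paper's, and its first half can be made sound: with respect to the $x$-weight filtration, $\mathrm{gr}_F A$ inherits the free left $R$-basis $\mathrm{Mon}(A)$, is a graded bijective quasi-commutative skew PBW extension of $R$, and is presented by your $P^{qc}$; Proposition \ref{exten Koszul hom} then makes it Koszul, and the basis also gives the Hilbert-series match you use. The genuine gap is the final transfer step, which you yourself label ``the main obstacle'' and then do not carry out. The ``Koszul deformation principle'' you invoke (Koszulity passes from $\mathrm{gr}_F A$ to $A$ once the Hilbert series agree) is not among the paper's tools, is not proved in your argument, and is not a formality; and the lattice-theoretic justification you sketch in its place does not work as stated. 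Distributivity of the collection $X_i=L_{i-1}PL_{k-i-1}$ is a simultaneous condition: Proposition \ref{prop.distr base} requires a single basis of $L_k$ adapted to all the $X_i$ at once, and the subspaces $L_{i-1}PL_{k-i-1}$ are not homogeneous for the $x$-weight, so replacing $P^{qc}$ by $P$ is not a ``unitriangular correction'' of an adapted basis in any automatic sense; turning that slogan into a proof is essentially as hard as the theorem itself (honest versions of such transfer statements are proved either by a spectral-sequence argument on the bar complex for the filtration, or by the filtered-lattice machinery of Polishchuk--Positselski, neither of which you supply or cite precisely). So as written, the decisive implication ``$\Omega(A^{qc})$ distributive $\Rightarrow$ $\Omega(A)$ distributive'' is asserted, not proved.

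For comparison, the paper avoids any deformation principle: it uses Backelin's lemma (Lemmas \ref{lema. R distr sii A dist} and \ref{lemma.R Kosz iff exten Koszul}) to conclude that the algebra on the generators $t_1,\dots,t_m,x_1,\dots,x_n$ subject only to the relations of $R$ is Koszul, extracts via Proposition \ref{prop.distr base} a basis $\mathcal{B}_k$ of $L_k$ adapted to all $X_i^P=L_{i-1}PL_{k-i-1}$, and then argues directly that $\mathcal{B}_k\cap X_i$ is a basis of each $X_i=L_{i-1}WL_{k-i-1}$ for the full relation space $W$ of $A$, so that Lemma \ref{lemma.Kosz iff distr} applies to $A$. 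To rescue your route you would need either to prove the transfer principle you cite (e.g., filter the bar complex of $A$ by $x$-weight, check the differentials preserve the internal degree, and deduce $\mathrm{Ext}^{s,p}_A(\mathbb{K},\mathbb{K})=0$ for $s\neq p$ from the same vanishing for $\mathrm{gr}_F A$), or to replace the lifting slogan by an explicit construction of one common adapted basis for all the $X_i$, which is exactly the kind of argument the paper attempts at the corresponding point of its proof.
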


\begin{proof}
 Let $R$ be a  finitely presented  algebra; by Proposition \ref{prop.Lez fga},
\begin{equation}\label{eq.repr R}
R= \mathbb{K}\langle t_1,\dots, t_m\rangle/\langle P\rangle
\end{equation}
where $P$ is the $\mathbb{K}$-space generated by homogeneous polynomials
\begin{equation}\label{eq.relat R}
r_1, \dots, r_s \in L^R:=\mathbb{K}\langle t_1,\dots, t_m\rangle.
\end{equation}
Let $A=\sigma(R)\langle x_1, \dots, x_n\rangle$ be a graded skew PBW extension. Then by Remark
\ref{rem.prop of graded skew}, $A$ is a finitely presented algebra. So, by Proposition
\ref{prop.Lez fga},
\begin{equation}\label{eq.repr A}
A=\mathbb{K}\langle t_1,\dots, t_m, x_1, \dots, x_n \rangle/\langle W \rangle,
\end{equation}
where $W$ is the $\mathbb{K}$-space generated by the polynomials
\begin{multline}\label{eq.relat A}
r_1, \dots, r_s, \ x_jt_k-\sigma_j(t_k)x_i-\delta_j(t_k), \ x_jx_i-c_{i,j}x_{i}x_{j} - (r_{0_{j,i}} + r_{1_{j,i}}x_{1} + \cdots + r_{n_{j,i}}x_{n})\\
 \in L:=\mathbb{K}\langle t_1,\dots, t_m, x_1, \dots, x_n \rangle,
\end{multline}
with $1\leq i,j\leq n$, $1\leq k\leq m$.\\

 Since $R$ is a Koszul algebra then:
 \begin{enumerate}
 \item[\rm(i)] By Lemma \ref{lemma.Kosz iff distr} we have that $R$ is  homogeneous quadratic algebra, and by Remark \ref{rem.prop of graded skew}, $A$ is homogeneous quadratic algebra.
 \item[\rm(ii)] By Lemma \ref{lemma.R Kosz iff exten Koszul}, we have that $A_P:= \mathbb{K}\langle t_1, \dots, t_m, x_1, \dots, x_n\rangle/\langle P\rangle_X$ is Koszul, where $\langle P\rangle_X$ is the two-sided ideal of $\mathbb{K}\langle t_1, \dots, t_m, x_1, \dots, x_n\rangle$ generated by the polynomials as in \ref{eq.relat R}. So, by Lemma  \ref{lemma.Kosz iff distr}, we have that for all $k\geq 0$, the collection of subspaces
     \begin{equation}\label{eq.X_i^P}
     X_i^P:= L_{i-1}PL_{k-i-1}\subseteq L_k, \ i=1,\dots, k-1
     \end{equation}
     is distributive. Therefore, by Proposition \ref{prop.distr base}, there exist a basis $\mathcal{B}_k$ of the  space  $L_k$ such that $\mathcal{B}_k\cap X_i^P$ is a basis of $X_i^P$ for each $1 \leq i \leq k-1$. Let $X_i:=  L_{i-1}WL_{k-i-1}\subseteq L_k$, $i=1,\dots, k-1$, where $W$ is the space generated by the polynomials as in (\ref{eq.relat A}).
 \end{enumerate}
 Let $Y:= (X_i\ \backslash \ X_i^P)\cap \mathcal{B}_k$. Since $X_i^P$ is a subspace of $X_i$ we
 claim that $\bar{Y}:=\{y + X_i^P\ \mid \ y\in Y\}=\{\bar{y}\in X_i/X_i^P \ \mid \ y\in Y \}$ is a
 basis of $X_i/X_i^P$. Indeed: if $0\neq\bar{x}\in X_i/X_i^P$, then $\bar{x}= x+X_i^P$, with $x\in X_i \ \backslash \ X_i^P$. Note
 that $x=k_1b_1+\cdots + k_{\rho}b_{\rho}$, where  $b_1,\dots, b_{\rho}$ are different nonzero elements in $\mathcal{B}_k$
 and $k_1,\dots, k_{\rho}\in \mathbb{K}$. Then $\bar{x}=\overline{k_1b_1+\cdots + k_{\rho}b_{\rho}}=k_1\bar{b_1}+\cdots + k_{\rho}\bar{b_{\rho}}$. If $b_\nu\in X_i^P$
 for some $1\leq \nu\leq \rho$ then $\bar{b_\nu}=0$. So  $\bar{x} = s_1\bar{v_1}+\cdots + s_{\mu}\bar{v_{\mu}}$ with  $s_1, \dots, s_{\mu}\in \mathbb{K}$
 and  $v_1,\dots, v_{\mu}\in Y$. Now suppose that $k_1\bar{y_1}+\cdots + k_v\bar{y_v}=\overline{0}$  with $k_1, \dots, k_v\in \mathbb{K}$
 and $0\neq\bar{y_1}$, $\dots$, $0\neq\bar{y_v}\in \bar{Y}$. Then ${y_1},\dots, {y_v}\notin X_i^P$,  $\overline{k_1y_1+\cdots + k_vy_v}=\overline{0}$
 and so $k_1y_1+\cdots + k_vy_v\in X_i^P$. As  $X_i^P\cap \mathcal{B}_k$ is a basis of $X_i^P$ then there are different nonzero
 elements $w_{v+1},\dots, w_{v+\mu} \in X_i^P\cap \mathcal{B}_k$ such that $k_1y_1+\cdots + k_vy_v=k_{v+1}w_{v+1} +\dots + k_{v+\mu}w_{v+\mu}$,
 with $k_{v+1},\dots, k_{v+\mu}\in \mathbb{K}$. As ${y_1},\dots, {y_v}\notin X_i^P$ then ${y_1}, \dots, {y_v}, w_{v+1}, \dots , w_{v+\mu}$
 are nonzero different elements in $\mathcal{B}_k$ such that $k_1y_1+\cdots + k_vy_v+(-k_{v+1})w_{v+1}+ \cdots + (-k_{v+\mu})w_{v+\mu}=0$.
 Then $k_1=\cdots = k_v= k_{v+1}=\cdots = k_{v+\mu}=0$.

Therefore, by Theorem 3.33 in \cite{Spindler}, we have that $(\mathcal{B}_k\cap X_i^P)\cup Y=
\mathcal{B}_k\cap X_i$ is a basis of $X_i$. So, by Proposition \ref{prop.distr base} the collection
of subspaces $X_1, \dots, X_{k-1}$ is distributive for each $k\geq 0$. Whence, by Lemma
\ref{lemma.Kosz iff distr} we have that $A$ is Koszul.

\end{proof}

\end{document}